\date{}
\renewcommand{\uppercasenonmath}[1]{}
\theoremstyle{plain}
\newtheorem{theorem}{Theorem}[section]
\newtheorem{proposition}[theorem]{Proposition}
\newtheorem{lemma}[theorem]{Lemma}
\newtheorem{corollary}[theorem]{Corollary}
\theoremstyle{definition}
\newtheorem{example}[theorem]{Example}
\newtheorem{definition}[theorem]{Definition}
\newtheorem{question}[theorem]{Question}
\theoremstyle{definition}
\theoremstyle{remark}
\def\Mod{{\rm Mod}}
\newcommand{\Tor}{\mbox{\rm Tor}}
\newcommand{\prodi}{\prod_{i\in I}}
\newcommand{\Id}{\mathrm{Id}}
\def\m{\frak m}
\def\Hom{{\rm Hom}}
\def\Ext{{\rm Ext}}
\def\Tor{{\rm Tor}}
\def\Ker{{\rm Ker}}
\def\Im{{\rm Im}}
\def\Coker{{\rm Coker}}
\def\Id{{\rm Id}}
\begin{document}
\begin{center}
{\large  \bf Almost coherent rings}

\vspace{0.5cm}  \ Xiaolei Zhang

{\footnotesize School of Mathematics and Statistics, Tianshui Normal University, tianshui 741001, China
\\}
\end{center}

\bigskip
\centerline { \bf  Abstract}
\bigskip
\leftskip10truemm \rightskip10truemm \noindent

Inspired from the work of P. Scholze on the finiteness of \(\mathbf{F}_{p}\)-cohomology groups of proper rigid-analytic varieties over \(p\)-adic fields,  Zavyalov recently introduced the notion of  almost coherent rings, which plays a key role in the almost ring theory.
In this paper, we characterize almost coherent rings in terms of almost flat modules and almost absolutely pure modules, integrating numerous classical results into almost mathematics. Besides, we show that every almost coherent $R$-module is not almost isomorphic to a coherent $R$-module, giving a negative answer to a question proposed in \cite[B. Zavyalov,
{\it  Almost coherent modules and almost coherent sheaves},
Memoirs of the European Mathematical Society 19. Berlin: European Mathematical Society (EMS), 2025]{B25}.
\vbox to 0.3cm{}\\
{\it Key Words:} almost coherent ring;  almost flat module;   almost absolutely pure module; cover and envelope.\\
{\it 2020 Mathematics Subject Classification:} 13E99.

\leftskip0truemm \rightskip0truemm
\bigskip

\section{introduction}

Throughout this paper,  we fix a ``base'' commutative ring $R$ with an ideal $\m$ such that $\m^2= \m$. For a ring $R$,
we always do almost mathematics on $R$ with respect to $\m$.

To manufacture the essential machinery for his proofs of the major conjectures in $p$-adic Hodge Theory \cite{F88}, G. Faltings pioneered the theory of almost rings  in the late 1980s and 1990s.  These results describe the deep structure of the cohomology of algebraic varieties over $p$-adic fields.  The true testament to the power of almost ring theory came with the rise of perfectoid geometry, developed by P. Scholze \cite{S12}. The fundamental theorem of perfectoid geometry, the Tilting Correspondence, states that the geometry of a perfectoid algebra in characteristic zero is ``almost equivalent'' to the geometry of its tilt, a perfect algebra in characteristic $p$.

The idea of almost coherence is  inspired from the work of P. Scholze on the finiteness of \(\mathbf{F}_{p}\)-cohomology groups of proper rigid-analytic varieties over \(p\)-adic fields.  In \cite{S13}, P. Scholze obtained that there is an almost isomorphism
$$
\mathrm{H}^{i}\big(X,\mathbf{F}_{p}\big)\otimes\mathcal{O}_{C}/p\simeq^{a}\mathrm{H}^{i}\big(X,\mathcal{O}_{X_{a}}^{+}/p\big)
$$
for any proper rigid-analytic variety \(X\) over a \(p\)-adic algebraically closed field \(C\). This almost isomorphism allows us to reduce studying certain properties of \(\mathrm{H}^{i}\left(X,\mathbf{F}_{p}\right)\) for a \(p\)-adic proper rigid-analytic space \(X\) to studying almost properties of the cohomology groups \(\mathrm{H}^{i}\left(X,\mathcal{O}_{X_{a}}^{+}/p\right)\), or the full complex \(\mathbf{R}\Gamma(X,\mathcal{O}_{X_{a}}^{+}/p)\). In particular, Scholze shows that \(\mathrm{H}^{i}\left(X,\mathbf{F}_{p}\right)\) are finite groups by deducing it from almost coherence of \(\mathrm{H}^{i}\left(X,\mathcal{O}_{X_{a}}^{+}/p\right)\) over \(\mathcal{O}_{C}/p\).

Coherent rings, which are generalizations of Noetherian rings, are very basic and important in the theory of commutative algebras.  It is well-known that a ring $R$ is coherent if and only if any product of flat  $R$-modules (projective  $R$-modules, copies of $R$) is  flat, if and only if  every $R$-module has a flat preenvelope, if and only if any pure quotient of  absolutely pure $R$-modules is absolutely pure, if and only if any direct limit of  absolutely pure $R$-modules is absolutely pure, if and only if any $R$-module has an absolutely pure (pre)cover,  if and only if $\Hom_R(E,I)$ is  flat for any absolutely pure module $E$ and injective module $I$, if and only if $\Hom_R(\Hom_R(F,I_1),I_2)$ is flat  for any  flat module $F$     and any injective modules $I_1$ and $I_2$. Recently,  B. Zavyalov \cite{B25} formally introduced the concepts of almost coherent rings, modules and sheaves, which plays a key role in the almost ring theory.  The main motivation of this paper is to extend these classical results in coherent rings as above to almost coherent rings.  Besides, we show that every almost coherent $R$-module is not almost isomorphic to a coherent $R$-module, giving a negative answer to a question proposed in \cite{B25}.

\section{Preliminaries and a counterexample to a problem proposed by Zavyalov}

As our results concerns almost rings, we refer some basic notions from \cite{GR03,B25}.  An $R$-module $M$ is said to be \emph{almost zero}, if $\m M$ is the zero module.
The category $\Sigma_R$, which is the full subcategory of $\Mod_{R}$ of all $R$-modules consisting of all  almost zero $R$-modules, is a Serre subcategory of $\Mod_{R}$. So, one can introduce the quotient category, which is called the category of \emph{almost $R$-modules},
\[
\Mod^a_{R}:=\Mod_{R}/\Sigma_R.
\]
 We refer to elements of $\Mod^a_R$ as almost
$R$-modules or $R^a$-modules

Note that the localization functor
\[
(-)^a \colon \Mod_{R} \to \Mod^a_{R}
\]
is exact.
So we can call a sequence of $R$-modules $$\cdots\rightarrow M_1\rightarrow M_2\rightarrow \cdots\rightarrow M_n\rightarrow\cdots$$  almost exact provided that the corresponding  sequence of $R^a$-modules $$\cdots\rightarrow M^a_1\rightarrow M^a_2\rightarrow \cdots\rightarrow M^a_n\rightarrow\cdots$$
is exact in $\mathrm{Mod}^a_R$.
A morphism $f\colon M\to N$ is called an \emph{almost isomorphism} (resp.\ \emph{almost injection}, resp.\ \emph{almost surjection}) if the corresponding morphism $f^a\colon M^a\to N^a$ is an isomorphism (resp.\ an injection, resp.\ a surjection) in $\mathrm{Mod}^a_R$.
It follows by \cite[Lemma 2.1.8]{B25} that the morphism $f$ is an almost injection (resp.\ almost surjection, resp.\ almost isomorphism) if and only if $\operatorname{Ker}(f)$ (resp.\ $\operatorname{Coker}(f)$, resp.\ both $\operatorname{Ker}(f)$ and $\operatorname{Coker}(f)$) is an almost zero module.

\begin{lemma}\cite[Lemma 4.1]{Zuscoh-24}\label{sdir} Let $R$ be a ring and $s\in R$. Let $\{M_i\mid i\in\Gamma\}$ be a direct system of  $R$-modules. Then $$s\lim\limits_{\longrightarrow}M_i\cong \lim\limits_{\longrightarrow}sM_i.$$
\end{lemma}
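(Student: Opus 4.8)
The plan is to exhibit multiplication by $s$ as a natural endomorphism of the identity functor and exploit the exactness of the directed colimit. First I would note that for every index $i$ the subset $sM_i=\{sm\mid m\in M_i\}$ is an $R$-submodule of $M_i$, and that each transition homomorphism $\phi_{ij}\colon M_i\to M_j$ is $R$-linear, hence $\phi_{ij}(sM_i)\subseteq sM_j$. Consequently $\{sM_i\mid i\in\Gamma\}$, equipped with the restrictions of the $\phi_{ij}$, is again a direct system, and the family of inclusions $sM_i\hookrightarrow M_i$ is a morphism of direct systems.

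Next I would pass to the colimit. Since $\Gamma$ is directed, $\lim\limits_{\longrightarrow}$ is exact on $\Mod_R$, so applying it to the monomorphisms $sM_i\hookrightarrow M_i$ produces a monomorphism $\iota\colon\lim\limits_{\longrightarrow}sM_i\to\lim\limits_{\longrightarrow}M_i$. (Alternatively, using the standard realization of $\lim\limits_{\longrightarrow}M_i$ as a quotient of $\bigoplus_{i}M_i$, the map $\iota$ sends the class of an element $n_i\in sM_i$ to its class in $\lim\limits_{\longrightarrow}M_i$, and injectivity follows because if $\phi_{ij}(n_i)=0$ in $M_j$ then already $\phi_{ij}(n_i)=0$ in $sM_j$.) It then remains only to compute the image of $\iota$.

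Writing $\lambda_i\colon M_i\to\lim\limits_{\longrightarrow}M_i$ for the canonical maps, every $x\in\lim\limits_{\longrightarrow}M_i$ equals $\lambda_i(m)$ for some $i$ and some $m\in M_i$, whence $sx=\lambda_i(sm)\in\Im(\iota)$ since $sm\in sM_i$; conversely $\Im(\iota)$ is generated by the elements $\lambda_i(sm)=s\lambda_i(m)$ with $m\in M_i$. Therefore $\Im(\iota)=s\lim\limits_{\longrightarrow}M_i$, and $\iota$ induces an isomorphism $\lim\limits_{\longrightarrow}sM_i\cong s\lim\limits_{\longrightarrow}M_i$, as claimed.

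This is a short diagram chase rather than a deep statement, so I do not anticipate a genuine obstacle; the one place to stay attentive is the injectivity of the comparison map $\iota$ — which is precisely where directedness of $\Gamma$ (equivalently, exactness of the filtered colimit) is used — together with the verification that its image is exactly $s\lim\limits_{\longrightarrow}M_i$ and not some a priori larger submodule of $\lim\limits_{\longrightarrow}M_i$.
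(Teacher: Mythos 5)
The paper does not reproduce a proof of this lemma; it simply cites it as \cite[Lemma 4.1]{Zuscoh-24}. So there is nothing internal to compare against, and the question reduces to whether your argument is correct. It is. You correctly observe that the $sM_i$ with the restricted transition maps form a direct subsystem, that exactness of filtered colimits (or, equivalently, the explicit description of elements of a filtered colimit as classes of elements of some $M_i$) yields an injective comparison map $\iota\colon \lim\limits_{\longrightarrow} sM_i \to \lim\limits_{\longrightarrow} M_i$, and you compute its image by the two inclusions: $\iota([sm]) = s\lambda_i(m) \in s\lim\limits_{\longrightarrow} M_i$, and conversely any $sx$ with $x=\lambda_i(m)$ equals $\lambda_i(sm)=\iota([sm])$. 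Since $s\lim\limits_{\longrightarrow} M_i = \{sx : x \in \lim\limits_{\longrightarrow}M_i\}$ is already a submodule, the word ``generated'' in your last step is harmless. This is the standard argument for this type of statement (multiplication by $s$ is a natural transformation of the identity functor, and colimits commute with images/coimages of natural transformations), and there is no gap.
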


\begin{proposition}\label{al-dirlim} The classes of almost zero modules, almost injections, almost surjections and almost isomorphisms are closed under direct limits. Moreover, the direct limits of almost  exact sequences is also almost  exact.
\end{proposition}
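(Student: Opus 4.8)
The plan is to reduce everything to the basic characterization of almost zero modules and then to the exactness of direct limits in the category of $R$-modules. First I would recall that a module $M$ is almost zero if and only if $\m M = 0$, equivalently, if and only if $sM = 0$ for every $s \in \m$ (since $\m^2 = \m$, the condition $\m M = 0$ is controlled by individual elements in the following sense: if $sM = 0$ for all $s \in \m$ then $\m M = 0$, and conversely). The cleanest route uses Lemma \ref{sdir}: if $\{M_i\}$ is a direct system of almost zero modules, then for each $s \in \m$ we have $sM_i = 0$, hence $s\lim\limits_{\longrightarrow}M_i \cong \lim\limits_{\longrightarrow}sM_i = \lim\limits_{\longrightarrow}0 = 0$; since this holds for all $s \in \m$, we get $\m\lim\limits_{\longrightarrow}M_i = 0$, so the direct limit is almost zero. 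This settles the first class.

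Next I would treat almost injections, almost surjections, and almost isomorphisms simultaneously using the kernel/cokernel criterion from \cite[Lemma 2.1.8]{B25} quoted in the preliminaries. Suppose $\{f_i \colon M_i \to N_i\}$ is a direct system of morphisms (a morphism of direct systems), each of which is an almost injection. Since direct limits are exact in $\Mod_R$, we have $\Ker(\lim\limits_{\longrightarrow}f_i) \cong \lim\limits_{\longrightarrow}\Ker(f_i)$. Each $\Ker(f_i)$ is almost zero by hypothesis, so by the case already handled, $\lim\limits_{\longrightarrow}\Ker(f_i)$ is almost zero, whence $\lim\limits_{\longrightarrow}f_i$ is an almost injection. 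The argument for almost surjections is identical with $\Coker$ in place of $\Ker$ (again using that $\lim\limits_{\longrightarrow}$ commutes with cokernels, being right exact, or more simply exact), and the almost isomorphism case follows by combining the two.

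Finally, for the ``moreover'' statement, suppose we have a direct system of complexes $\cdots \to M_1^{(i)} \to M_2^{(i)} \to \cdots$ that are almost exact, i.e., the homology at each spot is almost zero. Since taking direct limits is exact, it commutes with homology: the homology of $\lim\limits_{\longrightarrow}$ of the complexes at the $n$-th spot is $\lim\limits_{\longrightarrow}$ of the homologies at the $n$-th spot, each of which is almost zero, hence the limit is almost zero by the first part. Therefore the direct limit complex is almost exact. One minor technical point worth a sentence is that ``direct system of almost exact sequences'' should be interpreted as a direct system in the category of complexes (so the maps are genuine chain maps commuting with the structure maps), which is the natural reading.

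I expect the main (very mild) obstacle to be purely bookkeeping: being careful that the maps in the direct systems are compatible so that $\lim\limits_{\longrightarrow}$ of kernels/cokernels/homology is computed correctly, and invoking the exactness of the direct limit functor in $\Mod_R$ at the right moments. There is no deep difficulty; the content is entirely in Lemma \ref{sdir} plus exactness of filtered colimits, and the proof will be short.
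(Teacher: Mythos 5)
Your proof is correct and follows essentially the same route as the paper's: both reduce the almost-injection, almost-surjection, and almost-isomorphism cases to the almost-zero case via the kernel/cokernel criterion, and both rely on Lemma~\ref{sdir} together with the exactness of (filtered) direct limits. The only cosmetic difference is that you pass through the already-established almost-zero case for $\lim\limits_{\longrightarrow}\Ker(f_i)$, while the paper unwinds Lemma~\ref{sdir} directly; these are the same argument.
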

\begin{proof} Let $\{M_i\mid i\in\Gamma\}$ be a direct system of almost zero $R$-modules. Set $M=\lim\limits_{\longrightarrow}M_i$. Then for any $s\in \m$, $sM=s\lim\limits_{\longrightarrow}M_i\cong \lim\limits_{\longrightarrow}sM_i=0$ by Lemma \ref{sdir}. Let $\{f_i:M_i\rightarrow N_i\mid i\in\Gamma\}$ be a direct system of almost injection of $R$-modules. Set $f=\lim\limits_{\longrightarrow}f_i.$ Then for any $s\in\m$, we have $s\Ker(f)=s\Ker(\lim\limits_{\longrightarrow}f_i)\cong s\lim\limits_{\longrightarrow}\Ker(f_i)\cong \lim\limits_{\longrightarrow}s\Ker(f_i)=0$ by Lemma \ref{sdir} again.  Hence $f$ is an almost injection. The cases of almost surjections and almost isomorphisms are similar. The ``moreover'' part holds since direct limits is also commutative with kernels, images and cokernels.
\end{proof}
\begin{proposition}\label{inj-cog}
	Let $E$ be an injective cogenerator. Then the following statements are equivalent.
	\begin{enumerate}
		\item $T$ is  almost zero.
		\item   $\Hom_R(T,E)$  is almost zero.
	\end{enumerate}
\end{proposition}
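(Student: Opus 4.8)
The plan is to prove the contrapositive in both directions, exploiting the fact that $E$ is an injective cogenerator, so $\Hom_R(-,E)$ is a faithful exact functor on $\Mod_R$. Recall that $T$ is almost zero if and only if $sT=0$ for every $s\in\m$, equivalently $\m T=0$; and since $\m^2=\m$, this is further equivalent to $sT=0$ for every $s$ in a generating set of $\m$ over itself, but I will only need the plain definition.

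For the implication $(1)\Rightarrow(2)$, suppose $T$ is almost zero, so $\m T=0$. Then for any $s\in\m$ and any $f\in\Hom_R(T,E)$ we have $(sf)(t)=f(st)=0$ for all $t\in T$, whence $s\Hom_R(T,E)=0$; thus $\Hom_R(T,E)$ is almost zero. Conversely, for $(2)\Rightarrow(1)$, suppose $\Hom_R(T,E)$ is almost zero and fix $s\in\m$. Consider the multiplication map $\mu_s\colon T\to T$, $t\mapsto st$; I want to show $\mu_s=0$, i.e. $sT=0$. Applying the left-exact functor $\Hom_R(-,E)$ turns $\mu_s$ into the multiplication-by-$s$ map on $\Hom_R(T,E)$, which is zero because $s\Hom_R(T,E)=0$. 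So $\Hom_R(\mu_s,E)=0$. Since $E$ is a cogenerator, the functor $\Hom_R(-,E)$ is faithful: a morphism $g$ with $\Hom_R(g,E)=0$ must itself be zero (if $g$ were nonzero, pick $x$ in its image, $x\neq0$, and a map $\coker(T\to\im g)$-style argument — more directly, embed $\im g \hookrightarrow E^{(X)}$ for some set $X$ and compose with a coordinate projection to get a map $\im g\to E$ nonzero on $x$, then pull back along $g$ to contradict $\Hom_R(g,E)=0$). Hence $\mu_s=0$, i.e. $sT=0$. As $s\in\m$ was arbitrary, $T$ is almost zero.

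I expect the only subtlety to be justifying that $E$ being an injective cogenerator forces $\Hom_R(-,E)$ to reflect zero morphisms (the faithfulness step used in $(2)\Rightarrow(1)$); this is standard — it is exactly the defining property of a cogenerator that for every nonzero module $M$ and every nonzero $x\in M$ there is a map $M\to E$ not killing $x$ — but it is the one place where the hypothesis on $E$ is genuinely used rather than mere injectivity. Everything else is a one-line computation with the module action of $\m$.
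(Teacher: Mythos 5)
Your proof is correct, and the direction $(1)\Rightarrow(2)$ is word-for-word the paper's argument. The interesting comparison is in $(2)\Rightarrow(1)$: the paper takes an arbitrary $f_s\colon sT\to E$, uses \emph{injectivity} of $E$ to extend it to $g\colon T\to E$, computes $f_s(st)=sg(t)=0$ to conclude $\Hom_R(sT,E)=0$, and then invokes the cogenerator property to get $sT=0$. You instead look at the multiplication map $\mu_s\colon T\to T$ and observe that $\Hom_R(\mu_s,E)$ is multiplication by $s$ on $\Hom_R(T,E)$, hence zero, so faithfulness of $\Hom_R(-,E)$ (which holds because $E$ is a cogenerator) forces $\mu_s=0$. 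This is a genuinely leaner route: it uses only the cogenerator hypothesis, whereas the paper's extension step also leans on injectivity. One small slip in your parenthetical justification of faithfulness: a cogenerator $E$ gives an embedding of a module into a \emph{product} $E^X$, not a direct sum $E^{(X)}$ (e.g.\ $\mathbb{Z}$ does not embed into a direct sum of copies of $\mathbb{Q}/\mathbb{Z}$), and the ``pull back along $g$'' phrasing, as written, would need to extend a map off $\operatorname{Im} g$ and thus would re-import injectivity. But you don't need that detour at all: if $g\colon M\to N$ is nonzero, pick $m$ with $n:=g(m)\neq 0$; the cogenerator property directly gives $h\colon N\to E$ with $h(n)\neq 0$, so $h\circ g\neq 0$, contradicting $\Hom_R(g,E)=0$. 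With that one-line fix, your argument is complete and in fact slightly more economical than the paper's.
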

\begin{proof} $(1)\Rightarrow (2)$: For any $s\in\m$ and any $f\in\Hom_R(T,E)$ and any $t\in T$, we have $sf(t)=f(st)=f(0)=0$. So $sf=0$ for any $s\in\m$, that is, $\Hom_R(T,E)$  is almost zero.

$(2)\Rightarrow (1)$: For any $s\in\m$, let $f_s:sT\rightarrow E$ be an $R$-homomorphism and $i:sT\rightarrow T$ be the embedding map. Since $E$ is injective, there exists an $R$-homomorphism  $g:T\rightarrow E$ such that $f_s=gi$. For any $t\in T$, we have $f_s(st)=sg(t)=0$ as $s\Hom_R(T,E)=0$. So  $\Hom_R(sT,E)=0$. Hence $sT=0$ since $E$ is an injective cogenerator. It follows that $T$ is  almost zero.
\end{proof}

The following is the ``five lemma'' in almost mathematics.
\begin{proposition}{\bf}\label{s-5-lemma}
Let $R$ be a ring. Consider the following commutative diagram with exact rows:
$$\xymatrix@R=20pt@C=20pt{
A\ar[d]_{f_A} \ar[r]^{g_1} & B\ar[d]_{f_B}\ar[r]^{g_2} &C\ar[d]^{f_C}\ar[r]^{g_3} &D\ar[r]^{g_4}\ar[d]^{f_D}&E\ar[d]^{f_E} \\
A'\ar[r]^{h_1} &B'\ar[r]^{h_2}  & C'  \ar[r]^{h_3} & D' \ar[r]^{h_4} & E'.\\}$$
\begin{enumerate}
\item  If $f_B$ and $f_D$ are almost monomorphisms and $f_A$ is an almost epimorphism, then $f_C$ is an almost monomorphism.
\item If $f_B$ and $f_D$ are almost epimorphisms and $f_E$ is an almost monomorphism, then $f_C$ is an almost epimorphism.
\item If $f_A$ is an almost epimorphism, $f_E$ is an almost monomorphism, and $f_B$ and $f_D$ are almost isomorphisms, then $f_C$ is an almost isomorphism.
\item  If $f_A$, $f_B$, $f_D$ and $f_E$ are all almost isomorphisms, then $f_C$ is an almost isomorphism.
\end{enumerate}
\end{proposition}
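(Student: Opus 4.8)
The plan is to deduce the almost five lemma from the ordinary five lemma by passing to the quotient category $\Mod^a_R$. The key observation is that the localization functor $(-)^a\colon \Mod_R\to\Mod^a_R$ is exact (as recalled in the preliminaries), so applying it to the given commutative diagram with exact rows produces a commutative diagram in $\Mod^a_R$ with exact rows, whose vertical maps are $f_A^a, f_B^a, f_C^a, f_D^a, f_E^a$. By the characterization stated just before Lemma \ref{sdir} (from \cite[Lemma 2.1.8]{B25}), ``$f$ is an almost monomorphism/epimorphism/isomorphism'' is literally the same as ``$f^a$ is a monomorphism/epimorphism/isomorphism in $\Mod^a_R$.'' Thus each of the four assertions is an instance of the corresponding assertion of the classical five lemma applied inside the abelian category $\Mod^a_R$.

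Concretely, I would proceed as follows. First, note that $\Mod^a_R$ is an abelian category (being the Serre quotient of $\Mod_R$ by $\Sigma_R$), so all the usual diagram-lemma machinery — in particular the five lemma in its sharp form — is available there. Second, apply $(-)^a$ to the diagram; exactness of the functor guarantees the rows $A^a\to B^a\to C^a\to D^a\to E^a$ and $A'^a\to\cdots\to E'^a$ remain exact and the squares remain commutative. Third, translate the hypotheses: in part (1), $f_B^a$ and $f_D^a$ are monomorphisms and $f_A^a$ is an epimorphism in $\Mod^a_R$, so the classical five lemma (the ``sharp'' version, which only needs these three hypotheses to conclude $f_C^a$ is a monomorphism) gives that $f_C^a$ is a monomorphism, i.e. $f_C$ is an almost monomorphism. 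Parts (2), (3), (4) follow the same way, invoking the dual sharp five lemma for (2), and combining (1) and (2) for (3); part (4) is the special case of (3) where additionally $f_A, f_E$ are almost isomorphisms, hence in particular an almost epimorphism and an almost monomorphism respectively.

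Alternatively, if one prefers to avoid citing the sharp five lemma in a quotient category, one can give a direct diagram chase ``up to $\m$.'' For part (1), to show $\Ker(f_C)$ is almost zero, take $s\in\m$ and $c\in\Ker(f_C)$; one chases $c$ through the diagram exactly as in the ordinary proof but everywhere replacing equalities by ``equal after multiplying by an element of $\m$,'' using repeatedly that $\m^2=\m$ so that finitely many successive multiplications by elements of $\m$ can be absorbed into a single one. This reproves the statement internally but is strictly more tedious.

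The main obstacle, such as it is, is purely bookkeeping: one must be careful that the version of the classical five lemma being invoked is the refined one with exactly the stated (asymmetric) hypotheses, and that it is legitimately applied in the abelian category $\Mod^a_R$ rather than in $\Mod_R$. Once exactness of $(-)^a$ and the kernel/cokernel characterization of almost mono/epi/iso are in hand, there is no real content beyond this translation, so I would present the argument in the first, short form and relegate the diagram-chase alternative to a remark if at all.
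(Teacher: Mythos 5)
Your proposal is correct. The paper simply omits the proof with the remark that it is ``similar with the classical case,'' which most naturally suggests the direct diagram chase ``up to $\m$'' that you sketch as your alternative. Your primary argument---applying the exact localization functor $(-)^a$ to pass to the abelian quotient category $\Mod^a_R$, where exactness of $(-)^a$ preserves the exact rows and commutativity, and then invoking the sharp five lemma there together with the characterization from \cite[Lemma 2.1.8]{B25} that $f$ is an almost mono/epi/iso iff $f^a$ is a mono/epi/iso---is the cleaner route and avoids all bookkeeping. It is arguably the intended conceptual framing given that the paper has already set up $\Mod^a_R$ and the exactness of $(-)^a$ in the preliminaries; the diagram chase buys nothing extra except independence from citing the five lemma in an abstract abelian category. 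Your reduction of (3) to (1) and (2), and of (4) to (3), is also correct.
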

\begin{proof} The proof is similar with the classical case, and so we omit it.
\end{proof}

Recall from \cite[Definition 2.5.1]{B25} that an $R$-module $M$ is called almost finitely generated, if for any $s\in\m$ there is an integer $n_s$ and an $R$-homomorphism $$R^{n_s}\xrightarrow{f}M$$ such that $\Coker(f)$ is killed by $s.$  And recall from \cite[Definition 2.5.2]{B25} an $R$-module $M$ is called almost finitely presented, if for any $s,t\in\m$, there an integers $n_{s,t},m_{s,t}$ and a complex $$R^{m_{s,t}}\xrightarrow{g}R^{n_{s,t}}\xrightarrow{f}M$$
such that $\Coker(f)$ is killed by $s$ and $t\Ker(f)\subseteq \Im(g).$

\begin{definition}\cite[Definition 2.6.1, Definition 2.6.12]{B25} Let $R$ be a ring and $M$ an $R$-module.
\begin{enumerate}
\item An $R$-module $M$ is said to be almost coherent if is almost finitely generated and every almost almost finitely generated submodule $N^a\subseteq M^a$ is almost finitely presented.
\item A ring $R$ is called an almost coherent ring if $R$ itself is almost coherent as an $R$-module.
\end{enumerate}
\end{definition}

\begin{proposition}\label{char-ac}
A ring $R$ is almost coherent if and only if every finitely generated ideal of $R$ is almost finitely presented. Consequently, every almost coherent ring is coherent.
\end{proposition}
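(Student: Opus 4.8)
The plan is to prove the equivalence in two directions, and then extract the ``Consequently'' clause from the forward implication. For the ``only if'' direction, suppose $R$ is almost coherent, i.e.\ $R$ is almost coherent as an $R$-module. Since $R$ is finitely generated (by $1$), it is in particular almost finitely generated, so any finitely generated ideal $I\subseteq R$ gives an almost finitely generated submodule $I^a\subseteq R^a$; hence by the definition of almost coherence $I$ is almost finitely presented. For the ``if'' direction, assume every finitely generated ideal of $R$ is almost finitely presented. First note $R$ itself is almost finitely generated (indeed finitely generated). Then I would show that every finitely generated submodule $N$ of an arbitrary finitely presented (or just finitely generated free) $R$-module is almost finitely presented, by induction on the number of generators of the ambient free module $R^n$: the case $n=1$ is exactly the hypothesis on finitely generated ideals, and the inductive step uses the short exact sequence $0\to N\cap R^{n-1}\to N\to N/(N\cap R^{n-1})\to 0$ together with the fact (to be checked, via the almost version of the snake lemma / a direct diagram chase with the parameters $s,t\in\m$) that almost finite presentation is a ``two-out-of-three'' property for short exact sequences whose outer terms are almost finitely generated.

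Next I would upgrade this to arbitrary finitely generated submodules of an arbitrary $R$-module $M$: if $N\subseteq M$ is finitely generated, choose a surjection $R^n\twoheadrightarrow M$ and lift generators of $N$ to get a finitely generated submodule $N'\subseteq R^n$ mapping onto $N$; then $N$ is a quotient of $N'$ by the finitely generated submodule $N'\cap\ker(R^n\to M)\cap(\text{preimage})$—more carefully, one writes $N$ as a quotient of $N'$ by something almost finitely generated and again invokes the two-out-of-three property. Finally, for a general $R$-module $M$ that is almost finitely generated, any almost finitely generated submodule $N^a\subseteq M^a$ is represented (up to almost isomorphism, using \cite[Lemma 2.1.8]{B25} and the fact that almost finitely generated modules admit, for each $s$, an honest finitely generated approximation) by an honest finitely generated submodule, whose almost finite presentation follows from the previous step. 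This shows $R$ is almost coherent as an $R$-module, i.e.\ an almost coherent ring.

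For the ``Consequently'' clause, suppose $R$ is almost coherent; I want to conclude $R$ is coherent, i.e.\ every finitely generated ideal $I$ of $R$ is finitely presented. By the equivalence just proved, $I$ is almost finitely presented, so for every $s,t\in\m$ there is a complex $R^{m}\xrightarrow{g}R^{n}\xrightarrow{f}I$ with $\Coker(f)$ killed by $s$ and $t\Ker(f)\subseteq\Im(g)$. The key observation is that here we may use $\m^2=\m$: since $I$ is finitely generated we may take $f$ to be a fixed surjection $R^n\twoheadrightarrow I$ (so $\Coker(f)=0$, no $s$ needed), and then $t\Ker(f)\subseteq\Im(g)$ for a fixed complex works for every $t\in\m$; letting $t$ range over $\m$ and using $\m=\m^2=\m\cdot\m\subseteq\Ann(\Ker(f)/\Im(g))\cdot(\dots)$—more precisely, $\m\cdot(\Ker(f)/\Im(g))=\m^2\cdot(\Ker(f)/\Im(g))\subseteq\m\cdot 0=0$ is not quite it; instead one argues that $\Ker(f)$ is finitely generated: pick $t\in\m$, then $t\Ker(f)\subseteq\Im(g)$ is finitely generated, and since $\m^2=\m$ one can sum finitely many such to generate $\m\Ker(f)$, and combined with $\Ker(f)/\m\Ker(f)$ being handled by noting $\Ker(f)$ is a submodule of the finitely generated module $R^n$ over a ring where we can bootstrap. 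I expect \emph{this last bootstrapping step to be the main obstacle}: one must genuinely exploit $\m^2=\m$ (not just $\m\ne 0$) to pass from ``$\Ker(f)$ is almost finitely generated'' to ``$\Ker(f)$ is honestly finitely generated,'' and the cleanest route is probably to show directly that for a finitely generated ideal the almost-finite-presentation data, when $\m^2=\m$, forces $\Ker(f)$ finitely generated by writing $1=\sum a_ib_i$ with $a_i,b_i\in\m$ and noting each $b_i\Ker(f)\subseteq\Im(g_i)$ is finitely generated, so $\Ker(f)=\sum a_i b_i\Ker(f)\subseteq\sum a_i\Im(g_i)$ is finitely generated; hence $I=R^n/\Ker(f)$ is finitely presented and $R$ is coherent.
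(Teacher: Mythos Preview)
Your attempt at the ``Consequently'' clause is trying to prove a false statement: the sentence ``every almost coherent ring is coherent'' is a typo in the paper---it should read ``every coherent ring is almost coherent'' (which is the immediate corollary of the equivalence, since finitely presented implies almost finitely presented). The paper itself exhibits, two paragraphs later, an almost coherent ring that is \emph{not} coherent. Your bootstrapping argument breaks at the very last step: you write $1=\sum a_ib_i$ with $a_i,b_i\in\m$, but this is possible only when $\m=R$, in which case the almost theory is vacuous. Without $1\in\m$ you cannot recover $\Ker(f)$ from the various $b_i\Ker(f)$, and indeed no argument can work here.

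For the ``if'' direction your plan is correct in spirit but far more elaborate than needed. Since almost coherence of $R$ concerns only the $R$-module $R$ itself, you never need submodules of $R^n$ for $n>1$, nor any induction, nor the two-out-of-three property. The paper's argument is one line: given an almost finitely generated ideal $I\subseteq R$, pick a finitely generated subideal $J\subseteq I$ with $I/J$ almost zero; by hypothesis $J$ is almost finitely presented, so for each $s,t\in\m$ there is a complex $R^{m}\xrightarrow{g}R^{n}\xrightarrow{f}J$ with $s\cdot\Coker(f)=0$ and $t\Ker(f)\subseteq\Im(g)$; composing $f$ with the inclusion $J\hookrightarrow I$ gives the same complex with target $I$, and one checks directly that it still witnesses almost finite presentation (the cokernel grows only by the almost-zero $I/J$, and the kernel is unchanged). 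That is the whole proof.
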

\begin{proof} We only need to show the sufficiency. Let $I$ be an almost finitely generated ideal of $R$. Then there is a finitely generated subideal $J$ of $I$ such that $I/J$ is almost zero. Denote by $i:J\hookrightarrow I$ the embedding map. Since $J$ is finitely generated, it is also  almost finitely presented by assumption. So for any $s,t\in\m$, there an integers $n_{s,t},m_{s,t}$ and a complex $$R^{m_{s,t}}\xrightarrow{g}R^{n_{s,t}}\xrightarrow{f}J$$
such that $\Coker(f)$ is killed by $s$ and $t\Ker(f)\subseteq \Im(g).$
It is easy to check that $$R^{m_{s,t}}\xrightarrow{g}R^{n_{s,t}}\xrightarrow{i\circ f}I$$
is the required  complex  for $I$,  that is, $I$ is almost finitely presented. Consequently, $R$ is an  almost coherent ring.
\end{proof}

Trivially, every coherent ring is almost coherent. However the converse is not true in general.
\begin{example} Let $R_1$ be a coherent ring and $R_2$ be a non-coherent ring. Set $R=R_1\times R_2$ and $\m=R_1\times 0.$ Then it is easy to verify that $R$ is an almost coherent ring but a non-coherent ring.
\end{example}

B. Zavyalov \cite{B25} shows that every $R$-module almost isomorphic to a coherent $R$-module is almost coherent via the following Lemma.
\begin{lemma}\cite[Lemma 2.6.5]{B25}
	Let $M$ be an $R$-module. If for any finitely generated ideal $ \mathfrak{m}_0 \subset \mathfrak{m} $, there exist a coherent $R$-module $N$  and a morphism $f : N \to M$ such that 
		$ \mathfrak{m}_0(\operatorname{Ker} f) = 0 $ and $ \mathfrak{m}_0(\operatorname{Coker} f) = 0$.
Then $M$ is an almost coherent module
\end{lemma}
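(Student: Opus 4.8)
The statement to prove is Zavyalov's Lemma 2.6.5: if for every finitely generated ideal $\m_0\subseteq\m$ there is a coherent $R$-module $N$ and a map $f\colon N\to M$ with $\m_0\Ker(f)=0$ and $\m_0\Coker(f)=0$, then $M$ is almost coherent. The plan is to verify the two defining conditions of almost coherence directly, exploiting the factorization $\m^2=\m$ to split any given $s\in\m$ as a finite sum $\sum s_it_i$ with $s_i,t_i\in\m$, and then to push the finite generation/presentation data of $N$ across $f$ in a controlled way.

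First I would establish that $M$ is almost finitely generated. Fix $s\in\m$ and write $s=\sum_{k=1}^{r}s_kt_k$ with $s_k,t_k\in\m$; let $\m_0$ be the ideal generated by the $s_k$ and $t_k$, so that $s\in\m_0^2\subseteq\m_0$. Take the coherent module $N$ and map $f\colon N\to M$ supplied by the hypothesis for this $\m_0$. Since $N$ is coherent it is in particular finitely generated, so there is a surjection $p\colon R^n\twoheadrightarrow N$; composing, $f\circ p\colon R^n\to M$ has cokernel $M/\Im(f)=\Coker(f)$, which is killed by $\m_0$ and hence by $s$. This gives the required presentation $R^n\xrightarrow{f\circ p}M$ with $s\Coker(f\circ p)=0$, so $M$ is almost finitely generated.

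Next I would check that every almost finitely generated $N'^a\subseteq M^a$ is almost finitely presented; equivalently (after replacing $N'$ by a genuine finitely generated submodule $M'\subseteq M$ differing from it almost, which exists since almost finitely generated submodules contain honest finitely generated ones up to almost zero) I would show every finitely generated submodule $M'\subseteq M$ is almost finitely presented. Fix $s,t\in\m$; choose $\m_0$ finitely generated with $s,t$ lying in $\m_0$ (in fact in $\m_0^2$, using $\m^2=\m$ and enlarging as above so that we have room to absorb both $s$ and $t$), and take $N$, $f\colon N\to M$ from the hypothesis. The pullback (fiber product) $P=N\times_M M'$ maps to $M'$ with cokernel a subquotient of $\Coker(f)$ — hence $\m_0$-killed — and to $N$ with kernel $\Ker(f)$, also $\m_0$-killed; moreover $P$ is a submodule of $N\oplus M'$ whose image in $N$ is a finitely generated submodule of the coherent module $N$, hence itself coherent, hence finitely presented. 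Using the $\m_0$-annihilation of $\Ker(P\to N)$ and $\Coker(P\to M')$ together with the finite presentation of the image of $P$ in $N$, I would assemble the complex $R^m\xrightarrow{g}R^n\xrightarrow{h}M'$ witnessing almost finite presentation: the map $h$ comes from a finite generating set of (the image of) $P$ pushed to $M'$, with cokernel $s$-killed because $s\in\m_0$ kills $\Coker(P\to M')$ and the generators of the finitely generated submodule $M'$ are hit up to $\m_0$; and the relations $g$ come from the finite presentation of the coherent image, with $t\Ker(h)\subseteq\Im(g)$ because $t\in\m_0$ kills the discrepancy $\Ker(f)$ between $P$ and its image in $N$.

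The main obstacle I anticipate is the bookkeeping in the last step: carefully tracking how the two separate parameters $s$ (controlling the cokernel) and $t$ (controlling the kernel modulo relations) interact with the single ideal $\m_0$, and making sure that passing through the pullback $P$ and the coherence of its image in $N$ genuinely yields a \emph{complex} $R^m\to R^n\to M'$ of the form demanded by Zavyalov's definition of almost finite presentation, rather than merely an exact-up-to-almost-zero diagram. Using $\m^2=\m$ to arrange $s,t\in\m_0^2$ (so that any single element of $\m_0$ that must kill a module is itself a sum of products lying deeper in $\m_0$) is the device that makes the annihilation conditions propagate correctly; the rest is diagram chasing of the kind already invoked in Proposition~\ref{s-5-lemma} and Proposition~\ref{char-ac}.
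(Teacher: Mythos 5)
This lemma is only cited from Zavyalov's book in the paper; the paper itself offers no proof, so there is no in-paper argument to compare against. Judging your proposal on its own merits: the overall architecture is sound (verify almost finite generation directly, then reduce the submodule condition to genuine finitely generated $M'\subseteq M$ and transport a finite presentation from the coherent $N$ across $f$, with $\mathfrak m^2=\mathfrak m$ supplying the room to absorb the parameters $s,t$ into a single $\mathfrak m_0$), and your first paragraph establishing almost finite generation is correct. But the second paragraph contains a genuine gap.

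The gap is the claim that the pullback $P=N\times_M M'$ ``is a submodule of $N\oplus M'$ whose image in $N$ is a finitely generated submodule of the coherent module $N$.'' The projection $P\to N$ is in fact \emph{injective} with image $f^{-1}(M')$ (you also have the kernels on the two projections swapped: it is $P\to M'$, not $P\to N$, whose kernel is $\Ker(f)$). Now $f^{-1}(M')$ contains $\Ker(f)$, and the hypothesis only gives $\mathfrak m_0\Ker(f)=0$ --- nothing forces $\Ker(f)$, hence $f^{-1}(M')$, to be finitely generated. So coherence of $N$ does not apply to the image of $P$, and the presentation you want does not fall out. The repair is small but essential: do not use all of $P$. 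Instead, for each generator $x_l$ of $M'$ and each generator $a$ of $\mathfrak m_0$, note $ax_l\in\Im(f)$ (since $\mathfrak m_0\Coker(f)=0$) and pick $n_{l,a}\in N$ with $f(n_{l,a})=ax_l$; let $N_0\subseteq N$ be the finitely generated submodule they generate (equivalently, a finitely generated submodule $P_0\subseteq P$ whose image in $M'$ contains $\mathfrak m_0M'$). Then $N_0$ is finitely presented by coherence of $N$, the cokernel of $R^n\twoheadrightarrow N_0\xrightarrow{f|} M'$ is $M'/\mathfrak m_0M'$ hence killed by any $s\in\mathfrak m_0$, and the kernel of $N_0\to M'$ is $N_0\cap\Ker(f)$, killed by any $t\in\mathfrak m_0$, so the presentation $R^m\to R^n\to N_0$ extends to the required complex $R^m\to R^n\to M'$. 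With this substitution the argument closes; without it, the ``hence itself coherent, hence finitely presented'' step is unjustified.

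One further point worth tightening: the reduction from an almost finitely generated $R^a$-submodule of $M^a$ to an honest finitely generated $M'\subseteq M$ is asserted in passing but is exactly where Zavyalov's machinery for subobjects in $\Mod^a_R$ is needed (one chooses, for given $s,t$, a finitely generated representative; since almost finite presentation is a property tested one pair $(s,t)$ at a time and is invariant under almost isomorphism, this is fine --- but it deserves an explicit sentence rather than a parenthetical).
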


Subsequently, he proposed the following Question: 
\begin{question}\cite[Question 2.6.6]{B25}
 Does the converse of above Lemma  hold?
\end{question}

In brief, he ask if every almost coherent $R$-module almost isomorphic to a coherent $R$-module. Now, we gave an negative answer to this question by the following counterexample.

\begin{example} Let $D=\mathbb{R}[x^{\frac{1}{n}}\mid n\geq 1]$ where $\mathbb{R}$ is the field of all real numbers. Then $Q:=\langle x^{\frac{1}{n}}\mid n\geq 1\rangle$ is a maximal ideal of $D$. Set $V=D_Q$.  Then $V$ is a valuation domain with maximal ideal $M:=Q_Q$ and valuation group $\mathbb{Q}$, the totally ordered group of all ration numbers. Set $R=\mathbb{Q}+M$ be a subring of $V$. It follows by \cite[Proposition 4.9]{D78-1} that $R$ is a pseudo-valutaion domain. Note  that $R$, whose  Krull dimension is $1$, is a non-coherent local domain with a non-finitely generated maximal ideal $M$ (see \cite[Corollary 8.37, Theorem 8.5.17]{fk16}). Set $\m:=M$, then $\m^2=\m$. It follows by \cite[Theorem 2.3]{D78} that $\m$ is a flat $R$-module and so is $\m\otimes_R \m.$
	
First, we claim that $R$ is an almost Noetherian ring, and so is almost coherent. Indeed, the only nonzero prime ideals of $R$ is $M$. Then for any $s\in\m$, we have $sM\subseteq \langle s\rangle \subseteq M$ and so $M$ is almost finitely generated. It follows by \cite[Corollary 3.3]{Zuano-25}  that  $R$ is an almost Noetherian ring, and thus is also an almost coherent ring  by \cite[Corollary 2.7.7]{B25}.

Next, we will show $R$ itself can not be  almost isomorphic to any coherent $R$-module. On contrary, assume that $R$ is almost isomorphic to a coherent $R$-module $P$. Then there there is an exact sequence of $R$-modules
$$0\rightarrow T_1\rightarrow P\xrightarrow{f} R\rightarrow T_2\rightarrow 0,$$
such that $sT_1=sT_2=0$ for any $s\in\m.$ Then $s(R/\Im(f))=sT_2=0$for any $s\in\m,$ and so $\m\subseteq \Im(f)$. Since $\m$ is maximal, we have $\Im(f)=\m$ or $\Im(f)=R$. Since $P$ is finitely generated, so is $\Im(f)$ and hence $\Im(f)=R$, that is, $T_2=0$. Since $R$ is projective, so the short exact sequence $0\rightarrow T_1\rightarrow P\rightarrow R\rightarrow 0$ is split. Since $P$ is a coherent $R$-module, so is $R$ itself implying $R$ is a coherent ring, which is a  contradiction.
 \end{example}

\section{Almost pure exact sequences}

\begin{definition}\label{def-s-f} Let $R$ be a ring.
	A short exact sequence $\Phi: 0\rightarrow A\rightarrow B\rightarrow C\rightarrow 0$ is said to be \emph{almost pure} provided that $\Phi^a: 0\rightarrow A^a\rightarrow B^a\rightarrow C^a\rightarrow 0$ is a pure exact sequence in $\mathrm{Mod}^a_R$, or equivalently, for any  $R$-module $M$, the induced sequence $$M\otimes_R\Phi: 0\rightarrow M\otimes_RA\rightarrow M\otimes_RB\rightarrow M\otimes_RC\rightarrow 0$$ is almost exact.
\end{definition}

Obviously, any  pure exact sequence is almost pure.  There are many characterizations of pure exact sequences in \cite[34.5]{w}. The next result generalizes some of those characterizations to almost pure exact sequences.

\begin{theorem}\label{c-s-pure}
	Let $\Phi:0\rightarrow A\xrightarrow{f} B\xrightarrow{f'} C\rightarrow 0$  be a  short exact sequence of $R$-modules.
	Then the following statements are equivalent.
	\begin{enumerate}
		\item $\Phi$ is almost pure.
\item For any finitely presented $R$-module $M$, the induced sequence $$M\otimes_R\Phi:0\rightarrow M\otimes_RA\rightarrow M\otimes_RB\rightarrow M\otimes_RC\rightarrow 0$$ is almost exact.
		\item If a system of equations $f(a_i)=\sum\limits^m_{j=1}r_{ij}x_j\ (i=1,\dots,n)$ with $r_{ij} \in R$ and unknowns $x_1,\dots, x_m$ has a solution in $B$, then   the system of equations  $sa_i=\sum\limits^m_{j=1}r_{ij}x_j\ (i=1,\dots,n)$ is solvable in $A$ for any $s\in\m$.
		\item  For any given commutative diagram with $F$ finitely generated free and $K$ a finitely generated submodule of $F$, for any $s\in\m$ there exists an  $R$-homomorphism $\eta_s:F\rightarrow A$  such that $s\alpha=\eta_s i$:
		$$\xymatrix@R=20pt@C=25pt{
			0\ar[r] &K\ar[d]_{\alpha}\ar[r]^{i}&F\ar@{.>}[ld]^{\eta_s}\ar[d]^{\beta}\\
			& A\ar[r]_{f} &B\\};$$
		\item For any finitely presented $R$-module $N$, the induced sequence $0\rightarrow\Hom_R(N,A)\rightarrow \Hom_R(N,B)\rightarrow \Hom_R(N,C)\rightarrow 0$ is  almost exact.
	\end{enumerate}
\end{theorem}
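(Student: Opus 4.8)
The strategy is to prove the chain of implications $(1)\Rightarrow(2)\Rightarrow(3)\Rightarrow(4)\Rightarrow(1)$, and then handle $(2)\Leftrightarrow(5)$ separately, mirroring the classical treatment in \cite[34.5]{w} but inserting the parameter $s\in\m$ at each stage where a genuine equality is replaced by an ``almost'' statement. The implication $(1)\Rightarrow(2)$ is immediate, since finitely presented modules form a subclass of all $R$-modules. For $(2)\Rightarrow(3)$, given a solvable system $f(a_i)=\sum_j r_{ij}x_j$ in $B$, I would build the finitely presented module $M=R^n/\langle \text{rows of }(r_{ij})\rangle$ together with the element of $M\otimes_R A$ determined by the $a_i$; the hypothesis that this element maps to $0$ in $M\otimes_R B$ forces, after tensoring with the almost-exact sequence $M\otimes_R\Phi$, that $s$ times the element already vanishes in $M\otimes_R A$ for every $s\in\m$, which unwinds precisely to solvability of $sa_i=\sum_j r_{ij}x_j$ in $A$. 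The translation between ``vanishing in $M\otimes_R A$'' and ``solvability of the linear system'' is the standard finitely-presented-module bookkeeping and needs only the observation that $s\cdot(-)$ commutes with the relevant tensor identifications.

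For $(3)\Rightarrow(4)$: a commutative square with $K\subseteq F$ finitely generated, $F=R^m$ free, amounts to a finite list of generators $k_1,\dots,k_n$ of $K$ with $k_i=\sum_j r_{ij}e_j$ in $F$, an element $\alpha(k_i)=a_i\in A$, and the compatibility $\beta(k_i)=f(a_i)$, so $f(a_i)=\sum_j r_{ij}\beta(e_j)$ is a system solved in $B$ by $x_j=\beta(e_j)$. Condition $(3)$ then yields, for each $s\in\m$, elements $b_j\in A$ with $sa_i=\sum_j r_{ij}b_j$; defining $\eta_s:F\to A$ by $e_j\mapsto b_j$ gives $\eta_s(k_i)=sa_i=s\alpha(k_i)$, i.e.\ $\eta_s i=s\alpha$ on generators, hence on all of $K$. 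For $(4)\Rightarrow(1)$, I must show $M\otimes_R\Phi$ is almost exact for every $M$; by Proposition~\ref{al-dirlim} (direct limits of almost exact sequences are almost exact) and the fact that every module is a direct limit of finitely presented modules, it suffices to treat $M$ finitely presented, say $M=\coker(R^m\xrightarrow{\gamma}R^l)$. The only nontrivial point is injectivity of $M\otimes_R f$ up to almost zero: an element of $\Ker(M\otimes_R f)$ can be represented, using a presentation of $M$, by data producing a diagram as in $(4)$ with a suitable finitely generated $K$, and the map $\eta_s$ supplied by $(4)$ witnesses that $s$ annihilates the chosen element of $\Ker(M\otimes_R f)$; letting $s$ range over $\m$ shows $\m\Ker(M\otimes_R f)=0$.

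Finally, $(2)\Leftrightarrow(5)$: here I would pass through an injective cogenerator $E$ and the natural isomorphism $\Hom_R(N\otimes_R X,E)\cong\Hom_R(N,\Hom_R(X,E))$, together with exactness of $\Hom_R(-,E)$, to convert the almost-exactness of $\Hom_R(N,\Phi)$ into that of $\Phi^{+}\otimes_R N$ where $\Phi^{+}$ is the character-dual complex, then invoke Proposition~\ref{inj-cog} to identify ``almost zero'' on the two sides. This is the standard duality argument; the content in the ``almost'' setting is exactly Proposition~\ref{inj-cog}, which guarantees that a module $T$ is almost zero iff $\Hom_R(T,E)$ is. The main obstacle I anticipate is bookkeeping in $(4)\Rightarrow(1)$: one must be careful that the finitely generated submodule $K\subseteq F$ chosen to represent a given kernel element is genuinely finitely generated (it is, since $M$ is finitely presented, so the relations among the generators are finitely generated), and that the element being killed by $s$ really is the one we started with; but these are routine once the presentations are set up, and no deeper idea is needed beyond the classical proof plus Lemma~\ref{sdir}/Proposition~\ref{al-dirlim} to handle the reduction to finitely presented test modules.
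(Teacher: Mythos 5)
Your cycle $(1)\Rightarrow(2)\Rightarrow(3)\Rightarrow(4)\Rightarrow(1)$ is essentially sound and differs from the paper only in its decomposition: the paper proves $(2)\Leftrightarrow(3)$ in both directions (using the Fuchs--Salce criterion \cite[Ch.~I, Lemma~6.1]{FS01}) and then runs $(3)\Rightarrow(4)\Rightarrow(5)\Rightarrow(3)$, whereas you fold the return to $(1)$ through a direct $(4)\Rightarrow(2)$ argument; both routes reduce to the same element-level bookkeeping and either is fine, provided you do record the well-definedness of $\alpha:K\to A$ (which uses injectivity of $f$). The reduction of $(2)\Rightarrow(1)$ via Proposition~\ref{al-dirlim} matches the paper exactly.

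The genuine gap is in your treatment of $(2)\Leftrightarrow(5)$. The dualities you invoke are: for any module $M$, $\Hom_R(M\otimes_R X,E)\cong\Hom_R(M,\Hom_R(X,E))$, and for finitely presented $N$ and injective $E$, $\Hom_R(\Hom_R(N,X),E)\cong N\otimes_R\Hom_R(X,E)$. Combined with Proposition~\ref{inj-cog}, the first shows that condition~$(2)$ for $\Phi$ is equivalent to condition~$(5)$ for the character dual $\Phi^{+}$, and the second shows that condition~$(5)$ for $\Phi$ is equivalent to condition~$(2)$ for $\Phi^{+}$. That is all the duality gives: it swaps $(2)$ and $(5)$ \emph{while simultaneously replacing $\Phi$ by $\Phi^{+}$}. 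It does not, by itself, yield $(2)\Leftrightarrow(5)$ for the fixed sequence $\Phi$ --- as written, your argument is circular. To close the loop you would need one more input, for instance: prove one implication (say $(4)\Rightarrow(5)$) directly as the paper does, then obtain the other direction by passing to $\Phi^{+}$ and then to $\Phi^{++}$, using that the canonical map $\Phi\to\Phi^{++}$ is a levelwise pure monomorphism so that almost exactness of $M\otimes_R\Phi^{++}$ descends to $M\otimes_R\Phi$ (since $M\otimes_R(A\to A^{++})$ is injective and therefore reflects the vanishing of $s\cdot\xi$). None of this is mentioned in your sketch. The paper sidesteps the issue entirely by proving $(4)\Rightarrow(5)$ and $(5)\Rightarrow(3)$ by explicit diagram lifting; if you want to keep the duality route, you must supply the $\Phi^{++}$ step, and you must also first establish at least one direction of the $(2)$--$(5)$ link by a non-duality argument.
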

\begin{proof}  $(1)\Rightarrow(2)$: Trivial.

$(2)\Rightarrow(1)$: Let $M$ be an $R$-module. Then $M=\lim\limits_{\longrightarrow}M_i$, where $\{M_i\mid i\in\Gamma\}$ is a direct system of finitely presented $R$-modules. Then $0\rightarrow M_i\otimes_RA\xrightarrow{M_i\otimes_R f} M_i\otimes_RB\xrightarrow{M_i\otimes_R f'} M_i\otimes_RC\rightarrow 0$ is almost exact for each $i\in\Gamma$. And so $\Ker(M_i\otimes_R f)$ is almost zero. Consequently, $\Ker(M\otimes_R f)\cong\Ker(\lim\limits_{\longrightarrow}M_i\otimes_R f)\cong \lim\limits_{\longrightarrow}\Ker(M_i\otimes_R f)$ is almost zero by Lemma \ref{sdir} (or see Proposition \ref{al-dirlim} directly).

 $(2)\Rightarrow(3)$:  Let $M$ be a finitely presented $R$-module. Then there is an exact sequence $0\rightarrow K\rightarrow F\rightarrow M\rightarrow0$ with $F$ finitely generated free $R$-module and $K$ a finitely generated submodule of $F$. Then  $0\rightarrow M\otimes_RA\xrightarrow{1\otimes f} M\otimes_RB\rightarrow M\otimes_RC\rightarrow 0$ is almost exact by $(2)$.  So  $\Ker(1_M\otimes f)=0$, and hence $\Ker(1_{F/K}\otimes f)$ is almost zero. Now assume that there exists $b_j\in B$ such that $f(a_i)=\sum\limits^m_{j=1}r_{ij}b_j$ for any $j=1,\dots,m$. Assume that $\{e_1,\dots,e_n\}$ is the basis of  $F$ and suppose $K$ is generated by $m$ elements $\{\sum\limits^n_{i=1}r_{ij}e_i\mid j=1,\dots,m\}$. Then, $M=F/K$ is generated by $\{e_1+K,\dots,e_n+K\}$. Note that $\sum\limits^n_{i=1}r_{ij}(e_i+K)=\sum\limits^n_{i=1}r_{ij}e_i+K=0+K$ in $F/K$. Hence, we have $$\sum\limits^n_{i=1}((e_i+K)\otimes f(a_i))=\sum\limits^n_{i=1}((e_i+K)\otimes (\sum\limits^m_{j=1}r_{ij}b_j))=\sum\limits^m_{j=1}((\sum\limits^n_{i=1}r_{ij}(e_i+K))\otimes b_j)=0$$
	in $F/K\otimes B$. And so $\sum\limits^n_{i=1}((e_i+K)\otimes a_i)\in \Ker(1_{F/K}\otimes f)$.
	Hence, $s\sum\limits^n_{i=1}((e_i+K)\otimes a_i)=\sum\limits^n_{i=1}((e_i+K)\otimes sa_i)=0$ in $F/K\otimes_RA$ for any $s\in\m$. By \cite[Chapter I, Lemma 6.1]{FS01}, there exists $d^s_j\in A$ and $l^s_{ij}\in R$ such that $sa_i=\sum\limits^t_{k=1}l^s_{ik}d^s_k$ and $\sum\limits^n_{i=1}l^s_{ik}(e_i+K)=0$, and so  $\sum\limits^n_{i=1}l^s_{ik}e_i\in K$. Then there exists $t^s_{jk}\in R$ such that $\sum\limits^n_{i=1}l^s_{ik}e_i=\sum\limits^m_{j=1}t^s_{jk}(\sum\limits^n_{i=1}r_{ij}e_i)=\sum\limits^n_{i=1}(\sum\limits^m_{j=1}(t^s_{jk}r_{ij})e_i)$. Since $F$ is free, we have $l^s_{ik}=\sum\limits^m_{j=1}r_{ij}t^s_{jk}$. Hence
	$$sa_i=\sum\limits^t_{k=1}l^s_{ik}d^s_k=\sum\limits^t_{k=1}(\sum\limits^m_{j=1}r_{ij}t^s_{jk})d^s_k=
	\sum\limits^m_{j=1}r_{ij}(\sum\limits^t_{k=1}t^s_{jk}d^s_k)$$
	with $\sum\limits^t_{k=1}t^s_{jk}d^s_k\in A$. That is, $sa_i=\sum\limits^m_{j=1}r_{ij}x_j$ is solvable in $A$ for any $s\in\m$.

	$(3)\Rightarrow(2)$: Let $M$ be a finitely presented $R$-module. Then we have a exact sequence
	$M\otimes_RA\xrightarrow{1\otimes f} M\otimes_RB\rightarrow M\otimes_RC\rightarrow 0$. We will show that $\Ker(1\otimes f)$ is  almost zero. Let $\{\sum\limits^{n_\lambda}_{i=1}u^\lambda_i\otimes a^\lambda_i \mid \lambda\in \Lambda\}$ be the generators of $\Ker(1\otimes f)$. Then  $\sum\limits^{n_\lambda}_{i=1}u^\lambda_i\otimes f(a^\lambda_i)=0$ in $M\otimes_RB$ for each $\lambda\in \Lambda$. By \cite[Chapter I, Lemma 6.1]{FS01}, there exists $r^\lambda_{ij}\in R$ and $b^\lambda_j\in B$ such that $f(a^\lambda_i)=\sum\limits^{m_\lambda}_{j=1}r^\lambda_{ij}b^\lambda_j$ and $\sum\limits^{n_\lambda}_{i=1}u^\lambda_ir^\lambda_{ij}=0$ for each $\lambda\in \Lambda$. So $sa^\lambda_i=\sum\limits^{m_\lambda}_{j=1}r^\lambda_{ij}x^\lambda_j$  have a solution, say $a^{s,\lambda}_j$ in $A$ by (2) for any $s\in\m$. Then $$s(\sum\limits^{n_\lambda}_{i=1}u^\lambda_i\otimes a^{s,\lambda}_i)=\sum\limits^{n_\lambda}_{i=1}u^\lambda_i\otimes sa^{s,\lambda}_i=\sum\limits^{n_\lambda}_{i=1}u^\lambda_i\otimes (\sum\limits^{m_\lambda}_{j=1}r^\lambda_{ij}a^{s,\lambda}_j)=\sum\limits^{m_\lambda}_{j=1}((\sum\limits^{n_\lambda}_{i=1}r^\lambda_{ij}u^\lambda_i)\otimes a^{s,\lambda}_j)=0$$ for each $\lambda\in \Lambda$ and any $s\in \m$. Hence $s\Ker(1\otimes f)=0$ for any $s\in \m$. Consequently, $0\rightarrow M\otimes_RA\rightarrow M\otimes_RB\rightarrow M\otimes_RC\rightarrow 0$ is  almost exact.
	
	$(3)\Rightarrow(4)$:  Let  $\{e_1,\dots,e_n\}$ be the basis of the finitely generated free $R$-module $F$. Suppose $K$ is generated by $\{y_i=\sum\limits^{m}_{j=1}r_{ij}e_j\mid i=1,\dots,m\}$. Set $\beta(e_j)=b_j$ and $\alpha(y_i)=a_i$ for each $i$ and $j$. Then $f(a_i)=\sum\limits^{m}_{j=1}r_{ij}b_j$. By $(2)$, we have
	$sa_i=\sum\limits^m_{j=1}r_{ij}d^s_j$ for some $d^s_j\in A$. Let $\eta_s:F\rightarrow A$ be the $R$-homomorphism satisfying $\eta_s(e_j)=d^s_j$. Then $\eta_s i(y_i)=\eta i(\sum\limits^{m}_{j=1}r_{ij}e_j)=\sum\limits^{m}_{j=1}r_{ij}\eta(e_j)=\sum\limits^{m}_{j=1}r_{ij}d^s_j=sa_i=s\alpha(y_i)$,
	and so we have $s\alpha=\eta_s i$.
	
	$(4)\Rightarrow(5)$: Let $\gamma\in\Hom_R(N,C)$. Then there are $R$-homomorphisms $\delta:N\rightarrow B$ and $\alpha:K\rightarrow A$ such that the  following  diagram is commutative with rows exact:
	$$\xymatrix@R=20pt@C=30pt{
		0\ar[r] &K\ar[d]_{\alpha}\ar[r]^{i}&F\ar[r]^{\pi} \ar[d]^{\beta}& N\ar[d]^{\gamma}\ar[r]&0\\
		0\ar[r] & A\ar[r]_{f} &B\ar[r]_{f'}  &C\ar[r] & 0\\}$$
For any $s\in \m$. Then we have following  diagram is commutative with rows exact:
	$$\xymatrix@R=20pt@C=30pt{
		0\ar[r] &K\ar[d]_{s\alpha}\ar[r]^{i}&\ar@{.>}[ld]^{\eta_s}F\ar[r]^{\pi} \ar[d]^{s\beta}& N\ar@{.>}[ld]^{\delta_s}\ar[d]^{s\gamma}\ar[r]&0\\
		0\ar[r] & A\ar[r]_{f} &B\ar[r]_{f'}  &C\ar[r] & 0\\}$$
	By (4), there exists an homomorphism $\eta_s:F\rightarrow A$ such that $s\alpha=\eta_s i_K$. It follows by  \cite[Exercise 1.60]{fk16}, there is an $R$-homomorphism $\delta_s:N\rightarrow B$ such that $s\gamma=f'\delta_s$. Consequently, one can verify the  $R$-sequence $0\rightarrow \Hom_R(N,A)\rightarrow \Hom_R(N,B)\rightarrow \Hom_R(N,C)\rightarrow 0$ is almost exact.

	$(5)\Rightarrow(3)$:  Suppose that  $f(a_i)=\sum\limits^m_{j=1}r_{ij}b_j\ (i=1,\dots,n)$ with $a_i\in A$, $b_j\in B$ and  $r_{ij}\in R$. Let $F_0$ be a free module with a basis $\{e_1,\dots,e_m\}$ and  $F_1$ a free module with basis $\{e'_1,\dots,e'_n\}$. Then there are $R$-homomorphisms $\tau: F_0\rightarrow B$ and $\sigma: F_1\rightarrow \Im(f)$ satisfying $\tau(e_j)=b_j$ and $\sigma(e'_i)=f(a_i)$ for each $i,j$. Define an $R$-homomorphism $h:F_1\rightarrow F_0$ by $h(e'_i)=\sum\limits^m_{j=1}r_{ij}e_j$ for each $i$. Then $\tau h(e'_i)=\sum\limits^m_{j=1}r_{ij}\tau(e_j)=\sum\limits^m_{j=1}r_{ij}b_j=f(a_i)=\sigma(e'_i)$. Set $N=\Coker(h)$. Then $N$ is finitely presented. Thus there exists a homomorphism $\phi: N\rightarrow \Coker(f)$ such that the following diagram commutative:
	$$\xymatrix@R=20pt@C=30pt{
		&F_1\ar[d]_{\sigma}\ar[r]^{h}&F_0\ar[r]^{g} \ar[d]^{\tau}& N\ar@{.>}[d]^{\phi}\ar[r]&0\\
		0\ar[r] & A\ar[r]_{i} &B\ar[r]_{\pi}  &C\ar[r] & 0\\}$$
	Note that the induced sequence $$\Hom_R(N,B)\rightarrow \Hom_R(N,C)\rightarrow 0$$ is almost exact by (5). Hence for any $s\in\m$, there exists a homomorphism $\delta_s: N\rightarrow C$ such that $s\phi=\pi\delta_s$.
	Consider the following commutative diagram:
	$$\xymatrix@R=20pt@C=30pt{
		&F_1\ar[d]_{s\sigma}\ar[r]^{h}&F_0\ar@{.>}[ld]^{\eta_s}\ar[r]^{g} \ar[d]^{s\tau}& N\ar[ld]^{\delta_s}\ar[d]^{s\phi}\ar[r]&0\\
		0\ar[r] & A\ar[r]_{i} &B\ar[r]_{\pi}  &C\ar[r] & 0\\}$$
	We claim that there exists a homomorphism $\eta_s:F_0\rightarrow A$ such that $\eta_s f=s\sigma$. Indeed, since $\pi\delta_s g=s\phi g=\pi s \tau$, we have $\Im(s\tau-\delta_s g)\subseteq \Ker(\pi)=\Im(f)$. Define $\eta_s:F_0\rightarrow \Im(f)$ to be a homomorphism satisfying $\eta_s(e_i)=s\tau(e_i)-\delta_s g(e_i)$ for each $i$. So for each $e'_i\in F_1$, we have $\eta_s f(e'_i)=s\tau f(e'_i)-\delta_s g f(e'_i)=s\tau f(e'_i)$. Thus $i(s\sigma)=si\sigma=s\tau f=i\eta_s f$. Therefore, $\eta_s f=s\sigma$. Hence $sf(a_i)=s\sigma(e'_i)=\eta_s f(e'_i)=\eta (\sum\limits^m_{j=1}r_{ij}e_j)=\sum\limits^m_{j=1}r_{ij}\eta_s (e_j)$ with $\eta_s (e_j)\in \Im(f)$. So we have $sa_i=st'_1f(a_i) =\sum\limits^m_{j=1}r_{ij}t'_1\eta_s (e_j)$ with $t'_1\eta_s (e_j)\in A$ for each $i$.
\end{proof}

\section{Characterizing almost coherent rings in terms of almost flat modules}

In this section, we will investigate the notion of almost flat modules and characterize almost coherent rings in terms of almost flat modules.

\begin{definition}\cite[Definition 2.2.5]{B25}
  Let $R$ be a ring and $M$ an $R$-module.
\begin{enumerate}
\item  An $R^a$-module $M^a$ is flat if the functor $$M^a\otimes_{R^a}-:\Mod_R^a\rightarrow \Mod_R^a$$ is exact.
\item  An $R$-module $M$ is almost flat  if an $R^a$-module $M^a$
is flat as an  $R^a$-module.
\end{enumerate}
\end{definition}

\begin{proposition}\label{char-a-f} Let $R$ be a ring and $M$ an $R$-module. Then the following statements are equivalent.
\begin{enumerate}
\item $M$ is almost flat.
\item   $\Tor_1^R(N,M)$ is almost zero for every finitely presented $R$-module $N$.
\item   $\Tor_1^R(N,M)$ is almost zero for every $R$-module $N$.
\item    $\Tor_n^R(N,M)$ is almost zero every $R$-module $N$ and every $n\geq 1.$
\end{enumerate}
\end{proposition}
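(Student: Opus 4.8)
The plan is to prove the chain of implications $(4)\Rightarrow(3)\Rightarrow(2)\Rightarrow(1)$ trivially or nearly so, and then do the real work in $(2)\Rightarrow(4)$, passing through $(1)$ as needed. The implications $(4)\Rightarrow(3)\Rightarrow(2)$ are immediate specializations, and $(3)\Rightarrow(1)$ will follow because flatness of $M^a$ in $\Mod^a_R$ can be tested by exactness of $M^a\otimes_{R^a}-$ on short exact sequences of $R^a$-modules, which by exactness of $(-)^a$ corresponds to asking that $M\otimes_R-$ send short exact sequences of $R$-modules to almost exact sequences; the obstruction to this is precisely $\Tor_1^R(N,M)$ being almost zero for the relevant $N$.

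\smallskip

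First I would establish $(1)\Rightarrow(2)$. Given a finitely presented $R$-module $N$, pick a presentation $0\to K\to F\to N\to 0$ with $F$ finitely generated free and $K$ finitely generated (here I would really want $0\to K\to R^n\to N\to 0$). Applying $M\otimes_R-$ and using that $(-)^a$ is exact, the long exact Tor sequence shows $\Tor_1^R(N,M)^a$ is the kernel of $(K\otimes_R M)^a\to (R^n\otimes_R M)^a$. Since $M^a$ is flat, the sequence $0\to K^a\otimes_{R^a}M^a\to (R^a)^n\otimes_{R^a}M^a$ is exact in $\Mod^a_R$, and because tensor product in $\Mod^a_R$ is computed by localizing the tensor product in $\Mod_R$ (this is part of the formalism in \cite{B25}, and I would cite it), this kernel is almost zero; hence $\Tor_1^R(N,M)$ is almost zero.

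\smallskip

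The substantive step is $(2)\Rightarrow(4)$. I would argue in two stages. Stage one: bootstrap from finitely presented $N$ to arbitrary $N$ for $\Tor_1$, i.e. prove $(2)\Rightarrow(3)$. Every $R$-module $N$ is a direct limit $N=\varinjlim N_i$ of finitely presented modules, and $\Tor_1^R(-,M)$ commutes with direct limits, so $\Tor_1^R(N,M)\cong\varinjlim\Tor_1^R(N_i,M)$, which is a direct limit of almost zero modules, hence almost zero by Proposition \ref{al-dirlim}. Stage two: dimension shift to get all higher Tor. Given $N$ and $n\geq2$, take a short exact sequence $0\to N'\to P\to N\to 0$ with $P$ projective (even free); then $\Tor_n^R(N,M)\cong\Tor_{n-1}^R(N',M)$, and induction on $n$ (the base case $n=1$ being $(3)$, applied to the arbitrary module $N'$) finishes it. Finally $(3)\Rightarrow(1)$: to see $M^a$ is flat, it suffices to check that for every injection $A^a\hookrightarrow B^a$ in $\Mod^a_R$ the map $M^a\otimes_{R^a}A^a\to M^a\otimes_{R^a}B^a$ is injective; lifting to an honest map $A\to B$ of $R$-modules whose kernel and cokernel are almost zero, replacing $B$ by the pushout/image if necessary, and using the Tor exact sequence, the obstruction to almost-injectivity is a subquotient of $\Tor_1^R(C,M)$ for $C=\Coker$, which is almost zero by $(3)$; one then checks carefully, perhaps invoking Proposition \ref{s-5-lemma}, that the almost-zero kernel and cokernel of $A\to B$ do not spoil the conclusion.

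\smallskip

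The main obstacle I anticipate is the bookkeeping in $(3)\Rightarrow(1)$: morphisms in $\Mod^a_R$ are not literally $R$-module maps, so one must carefully reduce the abstract statement ``$M^a\otimes-$ preserves monos'' to statements about honest $R$-modules and honest $\Tor$, handling the almost-zero kernels and cokernels that appear when lifting. The cleanest route may be to bypass this: observe that flatness of $M^a$ is equivalent to $M\otimes_R-$ carrying every short exact sequence of $R$-modules to an almost exact sequence (by exactness of $(-)^a$ and the description of $\otimes_{R^a}$), and the latter is exactly the vanishing-up-to-almost-zero of the connecting $\Tor_1^R(-,M)$ terms, i.e. condition $(3)$. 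If that equivalence is taken as essentially definitional from the setup in \cite{B25}, then $(1)\Leftrightarrow(3)$ is short and all the weight falls on the routine direct-limit and dimension-shift arguments above.
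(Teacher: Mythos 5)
Your proposal is correct and follows essentially the same route as the paper: direct limits for $(2)\Rightarrow(3)$ (via Proposition \ref{al-dirlim}), dimension shifting for $(3)\Rightarrow(4)$, and the identification of flatness of $M^a$ with almost-exactness of tensoring with $M$ to close the loop with $(1)$. The only difference worth noting is that in $(1)\Rightarrow(2)$ you resolve the finitely presented module $N$, which is the cleaner choice; the paper instead resolves $M$ by a flat module and leaves the inference ``$M^a$ flat $\Rightarrow\Tor_1^R(N,M)$ almost zero'' rather compressed.
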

\begin{proof} $(1)\Rightarrow (2)$: Suppose $M$ is almost flat $R$-module. Consider the exact sequence $0\rightarrow K\rightarrow F\rightarrow M\rightarrow 0$ with $F$ a flat $R$-module. Then for any finitely presented $R$-module $N$, we have a long exact sequence
 $0\rightarrow\Tor_1^R(N,M)\rightarrow N\otimes_R K\rightarrow N\otimes_RF\rightarrow N\otimes_RM\rightarrow 0$. Since $M^a$ is flat, $\Tor_1^R(N,M)$ is almost zero. That is  $s\Tor_1^R(N,M)=0$  for every $s\in\m$ and every finitely presented $R$-module $N$.

$(4)\Rightarrow (3)\Rightarrow (2)$: Trivial.

$(2)\Rightarrow (1)$: Let $M$ be an $R$-module. For every almost exact sequence $0\rightarrow M_1\rightarrow M_2\rightarrow M\rightarrow 0$ of $R$-modules. Let $N$ be a finitely presented $R$-module. Then, by assumption, $0\rightarrow N\otimes_R K\rightarrow N\otimes_RF\rightarrow N\otimes_RM\rightarrow 0$ is almost exact, that is, the functor $M^a\otimes_{R^a}-:\Mod_R^a\rightarrow \Mod_R^a$ is exact.

$(2)\Rightarrow (3)$: Let $N$ be an $R$-module. Then $N=\lim\limits_{\longrightarrow}N_i$, where $\{N_i\mid i\in\Gamma\}$ is a direct system of finitely presented $R$-modules. By assumption, $\Tor_1^R(N_i,M)$ is almost zero, and so is $\Tor_1^R(N,M)$ by Lemma \ref{al-dirlim}.

$(3)\Rightarrow (4)$: Let $N$ be an $R$-module and $P_n\xrightarrow{f_n} P_{n-1}\xrightarrow{f_{n-1}} P_{n-2}\rightarrow\cdots\rightarrow P_0\xrightarrow{f_0} N\rightarrow 0$ be a projective resolution of $N$. Then, for every  every $n\geq 1,$ $\Tor_n^R(N,M)\cong \Tor_1^R(\Im(f_{n-1}),M)$, the latter is almost zero by assumption.
\end{proof}

\begin{corollary}\label{prop-af} Let $R$ be a ring. Then the following statements hold.
\begin{enumerate}
\item The class of almost flat modules is closed under almost isomorphisms.
\item The class of almost flat modules is closed under direct summands, direct sums, direct limits and extensions.
\end{enumerate}
\end{corollary}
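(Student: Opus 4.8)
The plan is to deduce everything from Proposition~\ref{char-a-f}, which reduces almost flatness to the vanishing (up to almost zero) of $\Tor_1^R(N,-)$ against an arbitrary module $N$, together with the standard homological behaviour of $\Tor$ and Proposition~\ref{al-dirlim} on direct limits.

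\begin{proof}
(1) If $M$ is almost flat and $g:M\to M'$ is an almost isomorphism, then for every $R$-module $N$ the long exact sequence of $\Tor$ attached to the short exact sequences $0\to\Ker g\to M\to\Im g\to 0$ and $0\to\Im g\to M'\to\Coker g\to 0$ shows that $\Tor_1^R(N,M')$ differs from $\Tor_1^R(N,M)$ only by subquotients of $\Tor_i^R(N,\Ker g)$ and $\Tor_i^R(N,\Coker g)$ with $i\in\{0,1\}$. Since $\Ker g$ and $\Coker g$ are almost zero, so is $N\otimes_R(-)$ and $\Tor_i^R(N,-)$ applied to them (if $sX=0$ then $s$ kills $N\otimes_R X$ and $\Tor_i^R(N,X)$, as these are computed from a resolution of $N$ tensored with $X$). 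Hence $\Tor_1^R(N,M')$ is almost zero for all $N$, so $M'$ is almost flat by Proposition~\ref{char-a-f}.

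(2) For direct sums and direct summands, use $\Tor_1^R(N,\bigoplus_k M_k)\cong\bigoplus_k\Tor_1^R(N,M_k)$: a direct sum of almost zero modules is almost zero (each $s\in\m$ kills it componentwise), and a direct summand of an almost flat module is a retract, so its $\Tor_1$ is a direct summand of an almost zero module, hence almost zero. For direct limits, $\Tor_1^R(N,\lim\limits_{\longrightarrow}M_k)\cong\lim\limits_{\longrightarrow}\Tor_1^R(N,M_k)$ since $\Tor$ commutes with direct limits; as each term is almost zero, so is the limit by Proposition~\ref{al-dirlim}. For extensions, given an exact sequence $0\to M'\to M\to M''\to 0$ with $M'$ and $M''$ almost flat, the long exact sequence gives $\Tor_1^R(N,M')\to\Tor_1^R(N,M)\to\Tor_1^R(N,M'')$, whose outer terms are almost zero, so $s\Tor_1^R(N,M)=0$ for $s\in\m$ (using $\m^2=\m$: pick $s=s_1s_2$ with $s_i\in\m$, then $s_1$ maps the middle term into the image of the almost-zero $\Tor_1^R(N,M')$, and $s_2$ kills that). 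Thus $M$ is almost flat.
\end{proof}

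The only mildly delicate point is the extension case, where one must use the hypothesis $\m^2=\m$ to conclude that a module sandwiched between two almost zero modules is itself almost zero; everything else is a routine transcription of the classical arguments, and I would expect no real obstacle beyond keeping track of which $s\in\m$ kills which term.
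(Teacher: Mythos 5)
Your proposal is correct, and since the paper simply writes ``The proofs of these statements are directly, and so we omit it,'' there is no written argument in the paper to compare against; your proof supplies the expected routine verification via Proposition~\ref{char-a-f} and the long exact sequence of $\Tor$. Two small points of precision worth noting. First, in part (1) the phrase ``differs from $\Tor_1^R(N,M)$ only by subquotients of $\Tor_i^R(N,\Ker g)$'' glosses over the fact that you must run the sandwiching argument twice (once through $\Im g$, once through $M'$), using at each step that $\Sigma_R$ is a Serre subcategory (equivalently, that $\m^2=\m$ forces a module caught between two almost zero modules to be almost zero). Second, in the extension case, not every $s\in\m$ factors as a single product $s_1s_2$ with $s_i\in\m$; rather $\m=\m^2$ means $s=\sum_i a_ib_i$ is a finite sum of such products, each of which kills $\Tor_1^R(N,M)$ by the two-step argument, and hence so does $s$. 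Both are cosmetic and do not affect the validity of the proof.
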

\begin{proof}
	The proofs of these statements are directly, and so we omit it.
\end{proof}

\begin{lemma}\label{spq-f}
Any almost pure submodule and almost pure quotient module of an almost flat module are almost flat.
\end{lemma}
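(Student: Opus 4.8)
The plan is to reduce both statements to a single $\Tor$-computation, using Proposition~\ref{char-a-f}: it suffices to show that $\Tor_1^R(N,X)$ is almost zero for every finitely presented $R$-module $N$, where $X$ is the submodule (resp.\ quotient) in question. Fix an almost pure exact sequence $\Phi:0\to A\xrightarrow{f} B\xrightarrow{f'} C\to 0$ with $B$ almost flat, so that $\Tor_1^R(N,B)$ is almost zero for all finitely presented $N$ by Proposition~\ref{char-a-f}.

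\emph{The quotient $C$.} Apply $N\otimes_R-$ to $\Phi$ to get the long exact sequence
$\Tor_1^R(N,B)\to\Tor_1^R(N,C)\to N\otimes_R A\xrightarrow{1\otimes f} N\otimes_R B$.
Since $N$ is finitely presented and $\Phi$ is almost pure, Theorem~\ref{c-s-pure}(1)$\Leftrightarrow$(2) gives that $1\otimes f$ is an almost injection, so $\Ker(1\otimes f)$ is almost zero; and $\Tor_1^R(N,B)$ is almost zero. Thus $\Tor_1^R(N,C)$, squeezed between an almost zero module and an almost zero module, is almost zero — concretely, for $s,t\in\m$, $st$ kills $\Tor_1^R(N,C)$. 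Hence $C$ is almost flat.

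\emph{The submodule $A$.} From the same long exact sequence, together with $\Tor_2^R(N,C)$ (which is almost zero by Proposition~\ref{char-a-f}(4) applied to $B$, again via the long exact sequence and the fact that $\Tor_1^R(N,C)$ is almost zero — or directly once we know $C$ is almost flat), we obtain
$\Tor_2^R(N,C)\to\Tor_1^R(N,A)\to\Tor_1^R(N,B)$
with both outer terms almost zero, so $\Tor_1^R(N,A)$ is almost zero and $A$ is almost flat. Alternatively, and more self-containedly, one can argue directly: knowing $C$ is almost flat, the sequence $0\to A\to B\to C\to 0$ shows via the long exact $\Tor$ sequence $\Tor_1^R(N,B)\to\Tor_1^R(N,A)\to\Tor_1^R(N,C)=^a 0$ (using that $\Tor$ of a finitely presented module against an almost flat module vanishes almost everywhere in all positive degrees, Proposition~\ref{char-a-f}) that $\Tor_1^R(N,A)$ is almost zero.

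The only mild subtlety — and the step I would be most careful about — is the bookkeeping with the ideal $\m$: almost vanishing is not literally vanishing, so each time I splice two almost zero modules around a third I should record that the product of two elements of $\m$ annihilates it, and then invoke $\m^2=\m$ to conclude the third module is genuinely almost zero. With that caveat the argument is routine, so I would present it compactly, stating the two long exact sequences and invoking Theorem~\ref{c-s-pure} and Proposition~\ref{char-a-f} at the appropriate points.
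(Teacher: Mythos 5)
Your argument follows essentially the same route as the paper: apply the long exact $\Tor$ sequence to $0\to A\to B\to C\to 0$, use almost purity (via Theorem~\ref{c-s-pure}) to make $\Ker(1\otimes f)$ almost zero, and use Proposition~\ref{char-a-f} for the vanishing over $B$; then bootstrap from $C$ almost flat to handle $A$. The quotient case is done correctly. Two small slips in the submodule case should be flagged, though. First, the parenthetical claim that $\Tor_2^R(N,C)$ is almost zero ``by Proposition~\ref{char-a-f}(4) applied to $B$, again via the long exact sequence and the fact that $\Tor_1^R(N,C)$ is almost zero'' is circular: the relevant fragment $\Tor_2^R(N,B)\to\Tor_2^R(N,C)\to\Tor_1^R(N,A)$ does not let you conclude $\Tor_2^R(N,C)$ is almost zero without already knowing $\Tor_1^R(N,A)$ is. The clean path is the one you mention as an afterthought --- prove $C$ is almost flat first, then apply Proposition~\ref{char-a-f}(4) to $C$. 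Second, your ``alternative'' long exact fragment $\Tor_1^R(N,B)\to\Tor_1^R(N,A)\to\Tor_1^R(N,C)$ is in the wrong order: the long exact sequence for $0\to A\to B\to C\to 0$ reads $\cdots\to\Tor_2^R(N,C)\to\Tor_1^R(N,A)\to\Tor_1^R(N,B)\to\Tor_1^R(N,C)\to\cdots$, so the piece relevant to $\Tor_1^R(N,A)$ is exactly the one you already used. Delete the incorrect alternative; the main argument stands.
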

\begin{proof}
	Let $0\rightarrow A\rightarrow B\rightarrow C\rightarrow 0$ be an almost pure exact sequence with $B$ almost flat. Let $M$ be an arbitrary $R$-module. Then the result follows by the induced exact sequence

$\Tor_2^R(M,C)\rightarrow\Tor_1^R(M,A)\rightarrow\Tor_1^R(M,B)\rightarrow\Tor_1^R(M,C)\rightarrow M\otimes_RA\rightarrow M\otimes_RB\rightarrow M\otimes_RC\rightarrow0.$
\end{proof}

\begin{proposition} An $R$-module $F$ is almost flat if and only if any exact sequence $0\rightarrow A\rightarrow B\rightarrow F\rightarrow 0$ is  almost pure.
\end{proposition}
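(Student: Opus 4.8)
The plan is to deduce both implications directly from the characterization of almost flatness via $\Tor$ (Proposition \ref{char-a-f}) together with Lemma \ref{spq-f}, so essentially no new calculation is needed.

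For the forward direction, assume $F$ is almost flat and let $\Phi\colon 0\rightarrow A\rightarrow B\rightarrow F\rightarrow 0$ be an arbitrary exact sequence. I would fix an arbitrary $R$-module $M$ and write down the long exact sequence
$$\cdots\rightarrow\Tor_1^R(M,F)\rightarrow M\otimes_RA\rightarrow M\otimes_RB\rightarrow M\otimes_RF\rightarrow 0.$$
By Proposition \ref{char-a-f}, $\Tor_1^R(M,F)$ is almost zero, and the kernel of $M\otimes_RA\rightarrow M\otimes_RB$ is a quotient of $\Tor_1^R(M,F)$, hence almost zero as well (a quotient of an almost zero module is almost zero). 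Thus $0\rightarrow M\otimes_RA\rightarrow M\otimes_RB\rightarrow M\otimes_RF\rightarrow 0$ is almost exact for every $M$, which is exactly Definition \ref{def-s-f} for $\Phi$ being almost pure.

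For the converse, I would not need the hypothesis for \emph{all} sequences: it suffices to pick one. Choose a short exact sequence $0\rightarrow K\rightarrow P\rightarrow F\rightarrow 0$ with $P$ a projective (hence flat, hence almost flat) $R$-module. By assumption this sequence is almost pure, so $F$ is an almost pure quotient of the almost flat module $P$, and Lemma \ref{spq-f} gives that $F$ is almost flat.

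I do not expect any real obstacle here; the only things to be careful about are the bookkeeping facts that the class of almost zero modules is closed under quotients (immediate from $\mathfrak m M=0\Rightarrow\mathfrak m(M/N)=0$) and that an exact sequence of $R$-modules all of whose $M\otimes_R-$ translates are almost exact is what ``almost pure'' means by definition, which is already recorded in Definition \ref{def-s-f}.
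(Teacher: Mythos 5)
Your proof is correct and takes essentially the same route as the paper: for the forward direction, the long exact $\Tor$ sequence together with the almost-vanishing of $\Tor_1^R(M,F)$ gives the almost exactness, and for the converse the paper also specializes to a sequence $0\to A\to P\to F\to 0$ with $P$ projective and reads off that $\Tor_1^R(M,F)$ is almost zero. Your only (harmless) stylistic deviation is to invoke Lemma~\ref{spq-f} as a black box in the converse, whereas the paper re-runs that same $\Tor$ computation directly.
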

\begin{proof}  Let $M$ be a finitely presented $R$-module. Suppose $F$ is almost flat.  Then $\Tor_1^R(M,F)$ is almost zero. The result follows by the induced exact sequence $$\Tor_1^R(M,F)\rightarrow M\otimes_RA\rightarrow M\otimes_RB\rightarrow M\otimes_RF\rightarrow0.$$
	
On the other hand,	let $0\rightarrow A\rightarrow P\rightarrow F\rightarrow0$ be an exact sequence with $P$ a projective $R$-module. Then the result follows by the following exact sequence:  $$0\rightarrow\Tor_1^R(M,F)\rightarrow M\otimes_RA\rightarrow M\otimes_RP\rightarrow M\otimes_RF\rightarrow0.$$
\end{proof}

 Let $M$ be an $R$-module and $\mathscr{P}$ be a  class  of $R$-modules. Recall from \cite{gt} that  an $R$-homomorphism $f: M\rightarrow P$ with  $P\in \mathscr{P}$ is said to be a $ \mathscr{P}$-preenvelope  provided that  for any $P'\in \mathscr{P}$, the natural homomorphism  $\Hom_R(P,P')\rightarrow \Hom_R(M,P')$ is an epimorphism.
 
 It is well-known that a ring $R$ is coherent if and only if any product of flat $R$-modules is flat, if and only if any $R$-module has a flat preenvelope (see \cite[Theorem 3.2.24, Proposition 6.5.1]{EJ00}). Now, we characterized almost coherent rings in terms of almost flat modules.
 \begin{theorem}\label{newchase} Let $R$ be an ring. Then the following assertions are equivalent.
	\begin{enumerate}
		\item $R$ is an almost coherent ring.
		\item Any product of almost flat $R$-modules is almost flat.
		\item Any product of flat $R$-modules is almost flat.
		\item Any product of  projective $R$-modules is almost flat.
		\item Any product of copies of  $R$ is almost flat.
\item Any $R$-module has an almost flat preenvelope.
	\end{enumerate}
\end{theorem}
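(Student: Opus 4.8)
The plan is to prove the cycle of implications
$(1)\Rightarrow(2)\Rightarrow(3)\Rightarrow(4)\Rightarrow(5)\Rightarrow(1)$ together with the loop
$(6)\Leftrightarrow$(the others), following the classical pattern but replacing ``flat'' by ``almost flat'' and ``exact'' by ``almost exact'' throughout, and using the characterization of almost flatness by the almost vanishing of $\Tor_1^R(N,-)$ on finitely presented $N$ from Proposition \ref{char-a-f}.

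For $(1)\Rightarrow(2)$: let $\{F_\lambda\}_{\lambda\in\Lambda}$ be almost flat modules and $N$ a finitely presented $R$-module. The classical point is that $N\otimes_R\prod_\lambda F_\lambda\cong\prod_\lambda(N\otimes_R F_\lambda)$ when $N$ is finitely presented, and more precisely $\Tor_1^R(N,\prod_\lambda F_\lambda)\cong\prod_\lambda\Tor_1^R(N,F_\lambda)$ because $R$ is coherent (here we use Proposition \ref{char-ac}: almost coherent $\Rightarrow$ coherent), so that finitely presented modules have projective resolutions by finitely \emph{generated} projectives, which commute with products. Each factor $\Tor_1^R(N,F_\lambda)$ is killed by every $s\in\m$, hence so is the product; thus $\Tor_1^R(N,\prod_\lambda F_\lambda)$ is almost zero and $\prod_\lambda F_\lambda$ is almost flat by Proposition \ref{char-a-f}. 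The implications $(2)\Rightarrow(3)\Rightarrow(4)\Rightarrow(5)$ are immediate since projective $\Rightarrow$ flat $\Rightarrow$ almost flat and $R$ itself is projective. For $(5)\Rightarrow(1)$: by Proposition \ref{char-ac} it suffices to show every finitely generated ideal $I$ is almost finitely presented; equivalently, by a standard argument, that for $I$ finitely generated and presented by $R^m\xrightarrow{g}R^n\xrightarrow{f}R$ with image $I$, the relation module is ``almost finitely generated''. Concretely, take the product $\prod_{\Lambda}R$ over a large index set; flatness-type arguments using the natural map $I\otimes_R\prod_\Lambda R\to\prod_\Lambda I$ show that almost flatness of $\prod_\Lambda R$ forces, for each $s\in\m$, that $s$ annihilates the kernel of a finite free presentation built over a finite subset of relations; spelled out via Proposition \ref{char-a-f} applied to $N=R/I$ (or to $I$ itself), $\Tor_1^R(R/I,\prod_\Lambda R)$ almost zero together with the Chase-type identification $\Tor_1^R(R/I,\prod_\Lambda R)\hookrightarrow\prod_\Lambda(R/I)$ and the canonical map yields that $I$ is almost finitely presented.

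For the envelope statement, I would prove $(6)\Leftrightarrow(2)$ (hence $(6)$ joins the cycle). For $(2)\Rightarrow(6)$: the class of almost flat modules is closed under direct sums, direct summands, direct limits and extensions by Corollary \ref{prop-af}, and now also under products by $(2)$; a class closed under products and containing enough objects (every module embeds in an almost flat — indeed injective-cogenerator-adjacent — module, or one simply uses that $R^a$-flat includes all flats) admits preenvelopes by the standard Eklof--Trlifaj / set-theoretic bound argument: given $M$, choose a cardinal $\kappa$ bounding $|\Hom_R(M,F)|$ for a representative set of ``small'' almost flat $F$, form the product of all such $F$ to the power of the relevant Hom-sets, and the diagonal map $M\to\prod F$ is the desired almost flat preenvelope. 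For $(6)\Rightarrow(2)$: given a family $\{F_\lambda\}$ of almost flat modules, embed $\prod_\lambda F_\lambda$ into its almost flat preenvelope $E$; since each $F_\lambda$ is almost flat, the projections $\prod F_\lambda\to F_\lambda$ extend along the preenvelope to maps $E\to F_\lambda$, giving a map $E\to\prod F_\lambda$ splitting the embedding up to the preenvelope relation — more carefully, one argues that $\prod F_\lambda$ is an almost pure submodule of $E$ (every finite system solvable in $\prod F_\lambda$ after multiplying by $s\in\m$ because it is solvable in each $F_\lambda$), and then invokes Lemma \ref{spq-f}: an almost pure submodule of an almost flat module is almost flat.

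The main obstacle I anticipate is the implication $(5)\Rightarrow(1)$, i.e.\ extracting ``almost finite presentation of finitely generated ideals'' from ``$\prod_\Lambda R$ is almost flat''. In the classical Chase theorem one uses that $\Tor_1^R(R/I,R^\Lambda)=0$ exactly when the submodule of relations among a finite generating set of $I$ is finitely generated; in the almost setting one only gets that $\Tor_1^R(R/I,R^\Lambda)$ is killed by each $s\in\m$, and one must convert this into the two-parameter condition ``$t\,\Ker(f)\subseteq\Im(g)$ for suitable finite $g$'' from the definition of almost finitely presented. This requires care in choosing $\Lambda$ to be the set of relations itself (so the universal relation lives in $\prod_\Lambda R$) and then tracking how multiplication by $s$ pushes a given relation into the span of finitely many others; the bookkeeping with the parameters $s,t$ and the finite free presentations $R^{m_{s,t}}\to R^{n_{s,t}}\to I$ is where the argument must be carried out most carefully. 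The preenvelope direction's set-theoretic bound is routine once closure under products is in hand, and the remaining implications are formal.
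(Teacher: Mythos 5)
Your proof of $(1)\Rightarrow(2)$ rests on the claim ``almost coherent $\Rightarrow$ coherent,'' which you attribute to Proposition \ref{char-ac}. That claim is false, and the paper itself contradicts it immediately: the Example following Proposition \ref{char-ac} exhibits $R=R_1\times R_2$ with $R_2$ non-coherent and $\m=R_1\times 0$, and shows $R$ is almost coherent but not coherent. (The ``Consequently'' sentence in the statement of Proposition \ref{char-ac} is evidently a typo for the reverse implication; the proposition's proof establishes only the stated equivalence, not any implication with coherence.) Since you cannot assume $R$ is coherent, a finitely presented $R$-module $N$ need not admit a projective resolution by finitely \emph{generated} projectives, and the clean identification $\Tor_1^R(N,\prod_\lambda F_\lambda)\cong\prod_\lambda\Tor_1^R(N,F_\lambda)$ is simply not available. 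What almost coherence actually yields is a chain of approximations: starting from $0\to N'\to P\to M\to 0$ with $P$ finitely generated projective, the module $N'$ is only \emph{almost} finitely generated, forcing one to insert exact sequences with almost-zero cokernels and prove that the comparison map $\Tor_1^R(M,\prod F_i)\to\prod\Tor_1^R(M,F_i)$ is an \emph{almost} isomorphism rather than an isomorphism. This diagram chase (the paper uses Proposition \ref{s-5-lemma} together with the observation that all terms involving the almost-zero module $T$ are themselves almost zero) is precisely the technical content of $(1)\Rightarrow(2)$, and your argument skips it entirely.

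For $(5)\Rightarrow(1)$ you correctly identify the Chase-type strategy and the need to choose the index set to be the module of relations itself, but you flag the ``$s,t$ bookkeeping'' as a worry; in fact the paper's argument is simpler than you anticipate. Taking $0\to K\to F\to J\to 0$ with $F$ finitely generated free, almost flatness of $R^K$ makes $\phi_{K,\mathcal{R}}\colon K\otimes_R R^K\to K^K$ an almost epimorphism (via the five lemma applied to the natural comparison diagrams); applying this to the diagonal element $(k)_{k\in K}$ produces, for each $s\in\m$, a finitely generated $L_s$ with $sK\subseteq L_s\subseteq K$, so $K$ is almost finitely generated and $J$ is almost finitely presented by \cite[Lemma 2.5.15]{B25} without any further two-parameter gymnastics. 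Your treatment of $(6)\Leftrightarrow(2)$ is essentially the paper's: the first argument you give for $(6)\Rightarrow(2)$ (extend the projections along the preenvelope to exhibit $\prod F_\lambda$ as a direct summand of an almost flat module) is exactly what is needed, and the ``more careful'' almost-purity detour you append is unnecessary; for $(2)\Rightarrow(6)$ both you and the paper appeal to the standard cardinality/set-theoretic bound from \cite{EJ00}.
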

\begin{proof} $(1)\Rightarrow (2)$: Let $\{F_i\}$ be a family of almost flat $R$-modules. Let $M$ be a finitely presented $R$-module. Considering exact sequence $$0\rightarrow N\rightarrow P\rightarrow M\rightarrow 0$$ with $P$ finitely generated projective, we have $N$ is almost finitely generated by \cite[Lemma 2.5.15]{B25}. Since $R$ is almost coherent, it is easy to see $P$ is an almost coherent $R$-module. Hence $N$ is almost finitely presented. And so there exists
	an exact sequence of $R$-modules $$0 \rightarrow K\rightarrow Q\rightarrow N\rightarrow 0,$$ where $K$ is
	almost finitely  generated and $Q$ is  finitely generated projective by \cite[Lemma 2.5.15]{B25} again. Therefore, there exists
	an exact sequence of $R$-modules $$0 \rightarrow K'\rightarrow K\rightarrow T\rightarrow 0,$$ where $K'$ is finitely generated and $T$ is almost zero.
	
Consider the following commutative diagrams of exact sequences:
$$\xymatrix@R=20pt@C=25pt{
	0\ar[r]^{}&\Tor_1^R(M,\prod F_i)\ar[r]^{}\ar[d]^{\theta^1_M}&  N\otimes_R\prod F_i\ar[r]^{}\ar[d]^{\theta_N} & P\otimes_R\prod F_i\ar[d]^{\cong} &\\
	0\ar[r]^{}&\prod\Tor_1^R(M, F_i) \ar[r]^{}& \prod (N\otimes_RF_i) \ar[r]^{} &\prod (P\otimes_RF_i) & \\}$$
	
$$\xymatrix@R=20pt@C=25pt{
&K\otimes_R\prod F_i\ar[r]^{}\ar[d]^{\theta_K}&  Q\otimes_R\prod F_i\ar[r]^{}\ar[d]^{\cong} & N\otimes_R\prod F_i\ar[d]^{\theta_N}\ar[r]^{} &0\\
	&\prod (K\otimes_RF_i)\ar[r]^{}& \prod (Q\otimes_RF_i) \ar[r]^{} &\prod (N\otimes_RF_i)\ar[r]^{} &0 \\}$$	
and	
	$$\xymatrix@R=20pt@C=25pt{
	\Tor_1^R(T,\prod F_i)\ar[r]^{}\ar[d]^{\theta^1_T}	&K'\otimes_R\prod F_i\ar[r]^{}\ar[d]^{\theta_{K'}}&  K\otimes_R\prod F_i\ar[r]^{}\ar[d]^{\theta_K} & T\otimes_R\prod F_i\ar[d]^{\theta_T}\ar[r]^{} &0\\
	\prod\Tor_1^R(T, F_i)\ar[r]^{}&\prod (K'\otimes_RF_i)\ar[r]^{}& \prod (K\otimes_RF_i) \ar[r]^{} &\prod (T\otimes_RF_i)\ar[r]^{} &0.\\}$$
Since each $F_i$ is almost flat, $s\Tor_1^R(M, F_i)=0$, and hence $s\prod\Tor_1^R(M, F_i)=0$ for any $s\in\m,$ that is, $\prod\Tor_1^R(M, F_i)$ is almost zero. 	
Since $K'$ is finitely generated, $\theta_{K'}$ is an epimorphism. Since $T$ is almost zero, so are $ T\otimes_R \prod F_i$, $\prod (T\otimes_RF_i)$ ,	$\Tor_1^R(T,\prod F_i)$ and $	\prod\Tor_1^R(T, F_i)$. Hence,  $\theta_K$ is an almost epimorphism.  Then $\theta_N$ is an almost isomorphism. So  $\theta^1_M$
is also an almost isomorphism. Hence $\Tor_1^R(M,\prod F_i)$ is almost zero. Consequently, $\prod F_i$	is almost flat by Proposition \ref{char-a-f}.

$(2)\Rightarrow (3)\Rightarrow (4)\Rightarrow (5)$: Trivial.

$(5)\Rightarrow (1)$:   Let $J$ be a finitely generated ideal of $R$, $0\rightarrow J\rightarrow R\rightarrow R/J\rightarrow 0$ an exact sequence. Consider the following commutative diagram,

 $$\xymatrix{
&J\otimes_R \prod_{i\in I} R\ar[d]_{\phi_{J,\mathcal{R}}}\ar[r]^{f} & R\otimes_R  \prod_{i\in I}R \ar[d]_{\phi_{R,\mathcal{R}}}^{\cong}\ar[r]^{} & R/J\otimes_R  \prod_{i\in I} R \ar[d]_{\phi_{R/J,\mathcal{R}}}^{\cong}\ar[r]^{} &  0\\
   0 \ar[r]^{} & \prod_{i\in I} (J\otimes_RR)\ar[r]^{} &\prod_{i\in I} (R\otimes_RR ) \ar[r]^{} & \prod_{i\in I} (R/J\otimes_RR ) \ar[r]^{} &  0.\\}$$
Since $\prod_{i\in I} R$ is an almost flat module, then $f$ is an almost monomorphism. Thus $\Ker (f)=\Ker (\phi_{J,\mathcal{R}})$ is almost zero and then $\phi_{J,\mathcal{R}}$ is an almost monomorphism.

 Consider the exact sequence $0\rightarrow K\rightarrow F \rightarrow J\rightarrow 0$ with $F$ a finitely generated free module, then there is a commutative diagram of exact sequences,
  $$\xymatrix{
& K\otimes_R \prod_{i\in I} R\ar[d]_{\phi_{K,\mathcal{R}}}\ar[r]^{} & F\otimes_R  \prod_{i\in I} R \ar[d]_{\phi_{F,\mathcal{R}}}\ar[r]^{} & J\otimes_R  \prod_{i\in I} R \ar[d]_{\phi_{J,\mathcal{R}}}\ar[r]^{} &  0\\
   0 \ar[r]^{} & \prod_{i\in I} (K\otimes_R R )\ar[r]^{} &\prod_{i\in I} (F\otimes_R R ) \ar[r]^{} & \prod_{i\in I} (J\otimes_R R ) \ar[r]^{} &  0.\\}$$
Note that $\phi_{J,\mathcal{R}}$ is an almost monomorphism and $\phi_{F,\mathcal{R}}$ is an isomorphism, thus $\phi_{K,\mathcal{R}}$ is an almost epimorphism by Lemma \ref{s-5-lemma}.

We consider the exact sequence
$$\xymatrix{
K\otimes_R  R^K \ar[rr]^{\phi_{K,\mathcal{R}}}\ar@{->>}[rd] &&K^K \ar[r]^{} & T\ar[r]^{} &  0\\
    &\Im \phi_{K} \ar@{^{(}->}[ru] &&  &   \\}$$
with $T$ almost zero. Let $x=(k)_{k\in K}\in K^K$. Then $sx\in \Im \phi_{K,\mathcal{R}}$ for all $s\in\m$. Subsequently, for any $s\in\m$ and for any $i\in K$, there exists $k^s_j\in K, r^s_{j,i}\in R$ such that $sx=\phi_{K,\mathcal{R}}(\sum_{j=1}^n k^s_j\otimes (r^s_{j,i})_{i\in K})=(\sum_{j=1}^n k^s_j r^s_{j,i})_{i\in K}$. Set $L_s=\langle\{k^s_j|j=1,....,n \}\rangle$ be a finitely generated submodule of $K$. Now, for any $k\in K$, $sk=\sum_{j=1}^n k^s_j r^s_{j,k}\in L_s$, thus $sK\subseteq L_s\subseteq K$ for any $s\in\m$ and thus $K$ is almost finitely generated. And so $J$ is almost finitely presented by \cite[Lemma 2.5.15]{B25}. Consequently, $R$ is an almost coherent ring  by Proposition \ref{char-ac}.

$(6)\Rightarrow (2)$: Let 	$\{F_i\mid i\in\Lambda\}$ be a family of almost flat modules. Then, by assumption, $\prodi F_i$ has an almost flat preenvelope  $f:\prodi F_i\rightarrow F$ with $F$ almost  flat. Hence we have the following commutative diagram:
 $$\xymatrix{
 	 \prodi F_i\ar[r]^{f}\ar[d]^{\pi_i} & F \ar[ld]^{g_i} \\
  F_i&}$$
 Consequently, $\Id_{\prodi F_i}=(\prodi g_i)\circ f$. So  $ \prodi F_i$ is a direct summand of $F$.  It follows by Corollary \ref{prop-af} that $\prodi F_i$ is almost flat.

$(2)\Rightarrow (6)$: It follows by \cite[Corollary 6.2.2, Lemma 5.3.12]{EJ00}.	
\end{proof}

\section{almost absolutely pure modules}
Recall  that an $R$-module $M$ is said to be absolutely pure provided that $\Ext_R^1(N,M)=0$  for any finitely presented $R$-module $N$. Now, we introduce the notion of almost absolutely pure modules.

\begin{definition}\label{def-s-f} Let $R$ be a ring.
 An $R$-module $M$ is said to be almost absolutely pure provided that  $\Ext_R^1(N,M)$ is almost zero for any finitely presented $R$-module $N$.
\end{definition}

Next, we will give several characterizations of almost absolutely pure modules.
\begin{theorem}\label{c-s-abp}
	Let $R$ be a ring and $E$ an $R$-module.
	Then the following statements are equivalent.
	\begin{enumerate}
		\item $E$ is almost absolutely pure.
		\item  Any short exact sequence $0\rightarrow E\rightarrow B\rightarrow C\rightarrow 0$ beginning with $E$ is almost pure.
		\item  $E$ is an almost pure submodule in every injective module containing $E$.
		\item  $E$ is an almost pure submodule in its injective envelope.
		\item  If  $P$ is finitely generated projective, $K$ is a finitely generated submodule of $P$ and  $f:K\rightarrow E$ is an  $R$-homomorphism, then for some $s\in \m$ there is an $R$-homomorphism $g_s:P\rightarrow E$ such that $sf=g_si$  with $i:K\rightarrow P$  the natural embedding map.
	\end{enumerate}
\end{theorem}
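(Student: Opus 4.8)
The plan is to prove the cycle $(1)\Rightarrow(2)\Rightarrow(3)\Rightarrow(4)\Rightarrow(5)\Rightarrow(1)$, mirroring the classical characterization of absolutely pure modules but replacing exactness by almost exactness throughout, and invoking Theorem \ref{c-s-pure} whenever I need to convert between the various incarnations of almost purity.

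For $(1)\Rightarrow(2)$, given a short exact sequence $0\rightarrow E\rightarrow B\rightarrow C\rightarrow 0$ and a finitely presented $R$-module $N$, I would apply $\Hom_R(N,-)$ and read off the long exact sequence; the obstruction to $\Hom_R(N,B)\rightarrow\Hom_R(N,C)$ being surjective is $\Ext_R^1(N,E)$, which is almost zero by hypothesis, so the $\Hom$-sequence is almost exact, and Theorem \ref{c-s-pure}$(5)\Rightarrow(1)$ gives that the original sequence is almost pure. The implications $(2)\Rightarrow(3)\Rightarrow(4)$ are immediate: $(3)$ is the special case of $(2)$ where $B$ is injective (every module embeds in an injective, and the quotient $C$ is whatever it is), and $(4)$ is the special case of $(3)$ for the injective envelope. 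For $(4)\Rightarrow(5)$, let $P$ be finitely generated projective, $K\subseteq P$ finitely generated, and $f:K\rightarrow E$ a homomorphism. Embed $E$ in its injective envelope $\iota:E\hookrightarrow I$; since $I$ is injective, $\iota f:K\rightarrow I$ extends along $i:K\hookrightarrow P$ to some $h:P\rightarrow I$. By $(4)$ together with Theorem \ref{c-s-pure}$(1)\Rightarrow(4)$ applied to the almost pure sequence $0\rightarrow E\rightarrow I\rightarrow I/E\rightarrow 0$, for each $s\in\m$ there is $g_s:P\rightarrow E$ with $s\iota f = \iota g_s i$; since $\iota$ is a monomorphism this yields $sf=g_s i$, as required (after possibly absorbing an extra factor from $\m$, exactly as in the last step of the proof of Theorem \ref{c-s-pure}).

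For the closing implication $(5)\Rightarrow(1)$, let $N$ be a finitely presented $R$-module and pick a presentation $0\rightarrow K\rightarrow P\rightarrow N\rightarrow 0$ with $P$ finitely generated free and $K$ finitely generated. Applying $\Hom_R(-,E)$ gives the exact sequence $\Hom_R(P,E)\rightarrow\Hom_R(K,E)\rightarrow\Ext_R^1(N,E)\rightarrow 0$, so $\Ext_R^1(N,E)=\Hom_R(K,E)/\Im(\Hom_R(P,E)\to\Hom_R(K,E))$. An element of $\Ext_R^1(N,E)$ is represented by some $f:K\rightarrow E$, and condition $(5)$ says precisely that for every $s\in\m$ there is $g_s:P\rightarrow E$ with $sf=g_s i$, i.e.\ $sf$ lies in the image of restriction; hence $s\Ext_R^1(N,E)=0$ for all $s\in\m$, so $\Ext_R^1(N,E)$ is almost zero.

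The only step requiring genuine care is $(4)\Rightarrow(5)$: the subtlety is that Theorem \ref{c-s-pure}$(4)$ produces a lifting $\eta_s:F\rightarrow A$ into the submodule $A$ of the given sequence only after multiplication by $s$, and one must track how this interacts with the injective extension $h:P\rightarrow I$ and the monomorphism $\iota:E\hookrightarrow I$ so as to land genuinely inside $E$ rather than merely inside $I$. I expect this to be handled exactly as in the final paragraph of the proof of Theorem \ref{c-s-pure} — choosing $t'_1\in R$ appropriately so that the resulting map has image in $E$ — at the cost of replacing $s$ by a product of two elements of $\m$, which is harmless since $\m^2=\m$. Everything else is a routine diagram chase with ``exact'' weakened to ``almost exact.''
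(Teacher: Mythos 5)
Your proof is correct, and the key ingredients are the same as the paper's: the characterizations of almost purity from Theorem \ref{c-s-pure}, and the long exact $\Ext$ sequence attached to a presentation of a finitely presented module. The only organizational difference is that the paper closes the first four statements into the cycle $(2)\Rightarrow(3)\Rightarrow(4)\Rightarrow(1)\Rightarrow(2)$ (proving $(4)\Rightarrow(1)$ directly from the injective envelope and the almost exactness of the $\Hom(N,-)$ sequence) and then handles $(1)\Leftrightarrow(5)$ separately via the exact sequence $\Hom_R(P,E)\to\Hom_R(K,E)\to\Ext_R^1(P/K,E)\to 0$, whereas you run a single cycle through all five items with $(4)\Rightarrow(5)$ as the hinge; both work. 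One remark on your closing paragraph: the ``genuine care'' you anticipate in $(4)\Rightarrow(5)$ is not actually needed. Theorem \ref{c-s-pure}$(4)$ produces, for each $s\in\m$, a map $\eta_s:F\to A$ landing already in the \emph{submodule} $A=E$ of the almost pure sequence, with $s\alpha=\eta_s i$; there is no intermediate factoring through $I$ to clean up and no need to spend an extra factor of $\m$ via $\m=\m^2$. The only small adjustment your argument needs (and silently glosses over) is that Theorem \ref{c-s-pure}$(4)$ is stated for $F$ finitely generated \emph{free}, while statement $(5)$ allows $P$ finitely generated \emph{projective}; this is handled routinely by writing $P\oplus P'=F$ free, extending $h$ by the projection $F\twoheadrightarrow P$, applying the theorem over $F$, and restricting the resulting $\eta_s$ back to $P$.
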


\begin{proof} $ (2)\Rightarrow (3)\Rightarrow (4)$: Trivial.
	
	$(4)\Rightarrow(1)$: Let $I$ be the injective envelope of $E$. Then we have an almost pure exact sequence $0\rightarrow E\rightarrow I\rightarrow L\rightarrow 0$ by (4). Then, by  Theorem \ref{c-s-pure}, we have $0\rightarrow\Hom_R(N,E)\rightarrow \Hom_R(N,I)\rightarrow \Hom_R(N,L)\rightarrow 0$  is almost exact  for any finitely presented $R$-module $N$. Since  $0\rightarrow\Hom_R(N,E)\rightarrow \Hom_R(N,I)\rightarrow \Hom_R(N,L)\rightarrow \Ext_R^1(N,E)\rightarrow 0$ is exact. Hence $\Ext_R^1(N,E)$ is  almost zero for any finitely presented $R$-module $N$, that is, $E$ is almost absolutely pure.
	
	$(1)\Rightarrow(2)$:  Let  $N$ be a finitely presented $R$-module and $0\rightarrow E\rightarrow B\rightarrow C\rightarrow 0$ an exact sequence. Then there is an exact sequence $0\rightarrow\Hom_R(N,E)\rightarrow \Hom_R(N,B)\rightarrow \Hom_R(N,C)\rightarrow \Ext_R^1(N,E)$  for any finitely presented $R$-module $N$. By (1), $0\rightarrow\Hom_R(N,E)\rightarrow \Hom_R(N,B)\rightarrow \Hom_R(N,C)\rightarrow  0$  is almost exact  for any finitely presented $R$-module $N$.
	Hence $0\rightarrow E\rightarrow B\rightarrow C\rightarrow 0$ is almost pure by  Theorem \ref{c-s-pure}.
	
	$(1)\Rightarrow(5)$:   Considering the exact sequence $0\rightarrow K\xrightarrow{i} P\rightarrow P/K\rightarrow 0$, we have the following exact sequence $ \Hom_R(P,E)\xrightarrow{i_{\ast}} \Hom_R(K,E)\rightarrow \Ext_R^1(P/K,E)\rightarrow 0$. Since $P/K$ is finitely presented, $\Ext_R^1(P/K,E)$ is almost zero by $(1)$. Hence $i_{\ast}$ is an almost epimorphism, and so $s\Hom_R(K,E)\subseteq \Im(i_{\ast})$ for any $s\in\m$. Let $f:K\rightarrow E$ be an $R$-homomorphism. Then there is an $R$-homomorphism $g_s:P\rightarrow E$ such that $sf=g_si$.
	
	$(5)\Rightarrow(1)$:  Let $N$ be a finitely presented $R$-module.  Then we have an exact sequence $0\rightarrow K\xrightarrow{i} P\rightarrow N\rightarrow 0$ where $P$ is finitely generated projective and $K$ is finitely generated. Consider the following exact sequence $ \Hom_R(P,E)\xrightarrow{i_{\ast}} \Hom_R(K,E)\rightarrow \Ext_R^1(N,E)\rightarrow 0$.  By $(5)$, we have  $s\Hom_R(K,E)\subseteq \Im(i_{\ast})$. Hence $\Ext_R^1(N,E)$ is  almost zero for any finitely presented $R$-module $N$, that is, $E$ is almost absolutely pure.
\end{proof}

\begin{proposition}\label{flat-FP-injective}
	Let $R$ be a ring and $F$ an $R$-module. Then the following statements are equivalent.
	\begin{enumerate}
		\item $F$ is almost flat.
		\item   $\Hom_R(F,E)$ is almost absolutely pure for any injective module $E$.
		\item  If $E$ is an injective cogenerator, then $\Hom_R(F,E)$ is almost absolutely pure.
	\end{enumerate}
\end{proposition}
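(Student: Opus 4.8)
The plan is to prove the three equivalences via the standard adjunction isomorphism
$$\Tor_1^R(N, F) \cong \Hom_R\!\big(\Ext_R^1(N, \Hom_R(F,E)), \text{?}\big)$$
type relation — more precisely, I would use the natural isomorphism $\Hom_R(N \otimes_R F, E) \cong \Hom_R(N, \Hom_R(F,E))$ for all $R$-modules $N$, valid since $E$ is injective, together with the fact that when $E$ is injective the functor $\Hom_R(-,E)$ is exact and hence converts $\Tor$ into $\Ext$. Concretely, for a finitely presented $N$ with a presentation $0 \to K \to P \to N \to 0$ ($P$ finitely generated projective, $K$ finitely generated), applying $-\otimes_R F$ and then $\Hom_R(-,E)$, and comparing with applying $\Hom_R(-, \Hom_R(F,E))$ directly, one obtains a natural isomorphism $\Hom_R(\Tor_1^R(N,F), E) \cong \Ext_R^1(N, \Hom_R(F,E))$.

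First I would prove $(1)\Rightarrow(2)$: if $F$ is almost flat then $\Tor_1^R(N,F)$ is almost zero for every finitely presented $N$ by Proposition \ref{char-a-f}; applying $\Hom_R(-,E)$ with $E$ injective and using the isomorphism above, $\Ext_R^1(N, \Hom_R(F,E)) \cong \Hom_R(\Tor_1^R(N,F), E)$ is almost zero by Proposition \ref{inj-cog} (the implication $(1)\Rightarrow(2)$ there, which holds for any injective $E$, not just an injective cogenerator — indeed that direction of the proof only uses that $E$ is a module). Hence $\Hom_R(F,E)$ is almost absolutely pure. The implication $(2)\Rightarrow(3)$ is trivial since an injective cogenerator is in particular injective. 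Finally, for $(3)\Rightarrow(1)$: fix an injective cogenerator $E$ and a finitely presented $R$-module $N$. Then $\Ext_R^1(N,\Hom_R(F,E))$ is almost zero by hypothesis, so $\Hom_R(\Tor_1^R(N,F), E)$ is almost zero; since $E$ is an injective cogenerator, Proposition \ref{inj-cog} (the implication $(2)\Rightarrow(1)$) forces $\Tor_1^R(N,F)$ to be almost zero. As $N$ was an arbitrary finitely presented module, $F$ is almost flat by Proposition \ref{char-a-f}.

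The main technical point — and the only step requiring genuine care — is establishing the natural isomorphism $\Hom_R(\Tor_1^R(N,F), E) \cong \Ext_R^1(N, \Hom_R(F,E))$ for finitely presented $N$ and injective $E$. This is the classical "duality" computation: take the short exact sequence $0 \to K \to P \to N \to 0$, tensor with $F$ to get the four-term exact sequence $0 \to \Tor_1^R(N,F) \to K\otimes_R F \to P \otimes_R F \to N \otimes_R F \to 0$, break it into short exact sequences, apply the exact functor $\Hom_R(-,E)$, and identify the resulting $\Hom$-terms using the tensor-hom adjunction $\Hom_R(- \otimes_R F, E) \cong \Hom_R(-, \Hom_R(F,E))$; then read off that the cokernel of $\Hom_R(P, \Hom_R(F,E)) \to \Hom_R(K, \Hom_R(F,E))$ — which is $\Ext_R^1(N, \Hom_R(F,E))$ — coincides with $\Hom_R(\Tor_1^R(N,F), E)$. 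One should note this identification is natural enough that "almost zero on one side iff almost zero on the other" transfers cleanly, which is exactly what Proposition \ref{inj-cog} packages. The rest of the argument is bookkeeping with Propositions \ref{char-a-f} and \ref{inj-cog}.
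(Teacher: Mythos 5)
Your proposal is correct and follows essentially the same route as the paper: both proofs hinge on the natural isomorphism $\Ext_R^1(N,\Hom_R(F,E))\cong\Hom_R(\Tor_1^R(N,F),E)$ for finitely presented $N$ and injective $E$, combined with Proposition \ref{char-a-f} and Proposition \ref{inj-cog}. The only difference is cosmetic: the paper asserts the duality isomorphism without proof, whereas you spell out the derivation from a presentation of $N$; you also correctly observe that the $(1)\Rightarrow(2)$ direction of Proposition \ref{inj-cog} needs no cogenerator hypothesis, a point the paper uses implicitly.
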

\begin{proof}

	$(1)\Rightarrow (2)$: Let $M$ be a finitely presented $R$-module and $E$ be an injective $R$-module. Since $F$ is almost flat,   $\Tor_1^R(M,F)$  is zero. Thus $\Ext_R^1(M,\Hom_R(F,E))\cong\Hom_R(\Tor_1^R(M,F),E)$ is also almost zero.  Thus $\Hom_R(F,E)$ is almost absolutely pure.
	
	$(2)\Rightarrow (3)$: Trivial.

	$(3)\Rightarrow (1)$:  Let  $E$ be an injective cogenerator. Since $\Hom_R(F,E)$ is almost absolutely pure, for any finitely presented $R$-module $M$ and  any $s\in \m$, we have  $$\Hom_R(\Tor_1^R(M,F),E)\cong \Ext_R^1(M,\Hom_R(F,E))$$ is almost zero. Since $E$ is an injective cogenerator, $\Tor_1^R(M,F)$ is almost zero  by Lemma \ref{inj-cog}.
\end{proof}
\begin{proposition}\label{spsub} Any direct products and direct sums of almost absolutely pure modules are almost absolutely pure.
\end{proposition}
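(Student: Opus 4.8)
The plan is to reduce the statement about direct products and direct sums to the defining condition via the interaction of $\Ext^1_R(N,-)$ with products and direct limits, using the fact that ``almost zero'' is preserved under both operations.

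First I would treat the direct product case. Let $\{E_i\mid i\in I\}$ be a family of almost absolutely pure $R$-modules and let $N$ be any finitely presented $R$-module. Since $\Ext^1_R(N,-)$ commutes with direct products, there is a natural isomorphism $\Ext^1_R\big(N,\prod_{i\in I}E_i\big)\cong\prod_{i\in I}\Ext^1_R(N,E_i)$. For each $i$ and each $s\in\m$, we have $s\Ext^1_R(N,E_i)=0$ by hypothesis; hence $s$ kills the product $\prod_{i\in I}\Ext^1_R(N,E_i)$ as well, so $\Ext^1_R\big(N,\prod_{i\in I}E_i\big)$ is almost zero. As $N$ was an arbitrary finitely presented module, $\prod_{i\in I}E_i$ is almost absolutely pure.

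Next I would handle the direct sum. Here the key fact is that for a finitely presented module $N$, the functor $\Ext^1_R(N,-)$ commutes with direct limits (as $N$ admits a finite free presentation $F_1\to F_0\to N\to 0$ with $F_0,F_1$ finitely generated free, and $\Hom_R(F_j,-)$ commutes with direct limits, one concludes $\Ext^1_R(N,-)$ does too). Writing the direct sum $\bigoplus_{i\in I}E_i$ as the direct limit of the finite partial sums $\bigoplus_{i\in I_0}E_i$ over finite subsets $I_0\subseteq I$, and noting that each finite partial sum is almost absolutely pure since it is an extension of almost absolutely pure modules (alternatively a finite product, covered by the first case), we get $\Ext^1_R\big(N,\bigoplus_{i\in I}E_i\big)\cong\lim\limits_{\longrightarrow}\Ext^1_R\big(N,\bigoplus_{i\in I_0}E_i\big)$, a direct limit of almost zero modules. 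By Proposition \ref{al-dirlim} this direct limit is almost zero, so $\bigoplus_{i\in I}E_i$ is almost absolutely pure.

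The only genuine subtlety is the commutation of $\Ext^1_R(N,-)$ with direct limits, which does require the finite presentability of $N$; everything else is formal. This is a standard fact, but I would state it explicitly (or cite it) rather than leave it implicit. One could alternatively bypass direct limits for the direct sum altogether by a more hands-on argument, but since Proposition \ref{al-dirlim} is already available it is cleanest to invoke it.
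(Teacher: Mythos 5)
Your direct product argument is correct and is exactly the paper's: $\Ext^1_R(N,-)$ commutes with products, and the class of almost zero modules is closed under products, so the claim follows immediately.

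The direct sum argument, however, has a genuine gap. You claim that $\Ext^1_R(N,-)$ commutes with direct limits whenever $N$ is finitely presented, justified by the finite free presentation $F_1\to F_0\to N\to 0$. This is not enough: $\Ext^1_R(N,M)$ is not a subquotient of $\Hom_R(F_0,M)\to\Hom_R(F_1,M)$ alone. Rather, letting $K=\ker(F_0\to N)$, dimension shifting gives $\Ext^1_R(N,M)\cong\coker\bigl(\Hom_R(F_0,M)\to\Hom_R(K,M)\bigr)$, and $\Hom_R(K,-)$ commutes with filtered direct limits only when $K$ is finitely \emph{presented} --- that is, when $N$ is of type $\mathrm{FP}_2$. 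Over a non-coherent base a finitely presented module need not be $\mathrm{FP}_2$, so the general lemma you invoke is false. What \emph{is} true, and what the paper uses, is that $\Ext^1_R(N,-)$ commutes with arbitrary direct \emph{sums} for $N$ finitely presented: here one only needs $\Hom_R(K,-)$ to commute with direct sums, which requires merely that $K$ be finitely generated, and that holds by the definition of finitely presented. So the fix is simply to treat the direct sum case exactly like the product case, writing $\Ext^1_R\bigl(N,\bigoplus_i E_i\bigr)\cong\bigoplus_i\Ext^1_R(N,E_i)$ and using that almost zero modules are closed under direct sums (or invoke Proposition~\ref{al-dirlim}); the detour through direct limits is both unnecessary and unsupported as stated.
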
\label{dsdpaabs}
\begin{proof} Let $\{M_i\mid i\in\Gamma\}$ be a family of almost absolutely pure modules. Then for any $s\in \m$ and any finitely presented $R$-module $N$, we have
$$s\Ext_R^1(N,\bigoplus\limits_{i\in\Gamma}M_i)\cong s\bigoplus\limits_{i\in\Gamma}\Ext_R^1(N,M_i)\cong \bigoplus\limits_{i\in\Gamma}s\Ext_R^1(N,M_i)=0,$$
and
$$s\Ext_R^1(N,\prod\limits_{i\in\Gamma}M_i)\cong s\prod\limits_{i\in\Gamma}\Ext_R^1(N,M_i)\cong \prod\limits_{i\in\Gamma}s\Ext_R^1(N,M_i)=0.$$
\end{proof}
\begin{proposition}\label{spsub} Any almost pure submodule of an almost absolutely pure module is almost absolutely pure.
\end{proposition}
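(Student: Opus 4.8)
The plan is to take an almost pure submodule $E$ of an almost absolutely pure module $M$ and show that $E$ satisfies the extension criterion of Theorem \ref{c-s-abp}(5), which is equivalent to almost absolute purity. So suppose $P$ is finitely generated projective, $K$ is a finitely generated submodule of $P$ with embedding $i\colon K\hookrightarrow P$, and $f\colon K\to E$ is an $R$-homomorphism; I must produce, for each $s\in\m$, a map $g_s\colon P\to E$ with $sf = g_si$.

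First I would compose $f$ with the inclusion $\iota\colon E\to M$ to get $\iota f\colon K\to M$. Since $M$ is almost absolutely pure, Theorem \ref{c-s-abp}(5) gives, for each $s\in\m$, an $R$-homomorphism $h_s\colon P\to M$ with $s(\iota f) = h_s i$. The next step is to replace the target $M$ by $E$ using the almost purity of $E$ in $M$. I would use the almost pure inclusion $0\to E\to M\to M/E\to 0$: by Theorem \ref{c-s-pure}(4) applied to this sequence with the finitely generated free cover of $P/K$ (writing $P$ as a summand of a finitely generated free module, or adapting the diagram directly), one obtains that for each $t\in\m$ there is a lift of $t$ times the relevant map into $E$. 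More directly, I would exploit Theorem \ref{c-s-pure}(5): since $0\to E\to M\to M/E\to 0$ is almost pure, the sequence $0\to \Hom_R(N,E)\to\Hom_R(N,M)\to\Hom_R(N,M/E)\to 0$ is almost exact for every finitely presented $N$, in particular for $N = P/K$.

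Concretely, take $s, t\in\m$. The map $h_s\colon P\to M$ sends $K$ into $E$ (since $h_s i = s\iota f$ has image in $E$), so $h_s$ induces $\bar h_s\colon P/K\to M/E$, which is in fact the zero map composed through $M$, i.e. $h_s$ descends to an element of $\Hom_R(P/K, M)$ mapping to $0$ in $\Hom_R(P/K, M/E)$; said differently, the class $[h_s]$ lies in the kernel of $\Hom_R(P/K,M)\to\Hom_R(P/K,M/E)$. By almost exactness, $t[h_s]$ lies in the image of $\Hom_R(P/K,E)\to\Hom_R(P/K,M)$, which translates (after pulling back along $P\to P/K$ and comparing with $h_s$) into the existence of $g\colon P\to M$ with image in $E$ such that $t h_s - g$ factors through $K$, hence kills nothing problematic. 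Setting $g_{st}\colon P\to E$ to be the corestriction of $g$, one checks $g_{st}i = t h_s i = ts\,\iota f = ts f$ as maps into $E$. Since $st$ ranges over a cofinal set of elements of $\m$ as $s,t$ vary (using $\m^2=\m$), this exactly verifies condition (5) of Theorem \ref{c-s-abp} for $E$, so $E$ is almost absolutely pure.

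The main obstacle I anticipate is the bookkeeping with the two parameters $s$ and $t$ and making the diagram-chase corestriction precise: one has a map into $M$ whose image sits inside $E$ only after multiplying by an element of $\m$, and one must be careful that "$g$ factors through $K$" is turned correctly into the identity $g_{st}i = (st)f$ rather than merely $g_{st}i = (st)f$ up to an almost-zero error. Using $\m^2=\m$ to absorb the extra factor is the standard device here, and Theorem \ref{c-s-pure} together with Theorem \ref{c-s-abp} supplies all the equivalences needed; alternatively one can phrase the whole argument via the long exact $\Ext$ sequence applied to $0\to E\to M\to M/E\to 0$, noting $\Ext^1_R(N,E)$ injects almost-isomorphically into $\Ext^1_R(N,M)$ modulo the image of $\Hom_R(N,M/E)$, but the two formulations are interchangeable and I would pick whichever keeps the constants cleanest.
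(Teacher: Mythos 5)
Your overall strategy matches the paper's: lift $f$ through the almost absolute purity of $M$ to obtain $h_s\colon P\to M$ with $h_s i = s\iota f$, then use the almost purity of $E\hookrightarrow M$ to corestrict, at the cost of a second factor from $\m$, and finally absorb the two factors via $\m^2=\m$. The paper does the corestriction via Theorem~\ref{c-s-pure}(4) applied to the square with verticals $s\alpha\colon K\to E$ and $h_s\colon F\to M$, which directly returns $\eta\colon F\to E$ with $\eta i = ts\alpha$; no quotient module ever enters. Your route through Theorem~\ref{c-s-pure}(5) with $N=P/K$ can be made to work, but the middle of the argument contains a genuine error: the claim that ``$h_s$ descends to an element of $\Hom_R(P/K,M)$ mapping to $0$ in $\Hom_R(P/K,M/E)$'' is false, since $h_s(K)=s f(K)$ is generally nonzero, so $h_s$ does not descend to $P/K$ and there is no class $[h_s]\in\Hom_R(P/K,M)$ to speak of. What you actually have is only the induced $\bar h_s\in\Hom_R(P/K,M/E)$. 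The correct repair: almost surjectivity of $\Hom_R(P/K,M)\to\Hom_R(P/K,M/E)$ gives, for each $t\in\m$, a lift $\tilde h\in\Hom_R(P/K,M)$ of $t\bar h_s$; then $g:=t h_s-\tilde h\pi\colon P\to M$ (with $\pi\colon P\to P/K$ the projection) maps to zero in $M/E$, hence has image in $E$, and $gi=t h_s i-\tilde h\pi i=t h_s i=tsf$. Two further small points: ``$t h_s - g$ factors through $K$'' should read ``vanishes on $K$'' (equivalently, factors through $P/K$); and $\m^2=\m$ gives that each $s\in\m$ is a \emph{finite sum} of products $s_k t_k$, not necessarily a single product, so one must use additivity of the construction in $s$ (which does hold, but ``$st$ ranges over a cofinal set'' is not the right justification). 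With those corrections your proof closes and agrees in substance with the paper's; the paper's choice of criterion~(4) over~(5) is simply the more economical packaging of the same idea.
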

\begin{proof} Let $M_2$ be an almost absolutely pure module and $M_1$ be an almost pure submodule of $M_2.$ Let $K$ be a finitely generated submodule of a free module $F$. Let $\alpha:K\rightarrow M_1$ be an $R$-homomorphism. Consider the following commutative diagram
		$$\xymatrix@R=20pt@C=25pt{
			0\ar[r] &K\ar[d]_{s_1\alpha}\ar[r]^{i}&F\ar@{.>}[ld]^{\eta}\ar@{.>}[d]^{\beta_1}\\
			0\ar[r] & M_1\ar[r]_{f} &M_2\\}$$
Since $M_2$ is almost absolutely pure, for any $s_1\in \m$ there is  an $R$-homomorphism $\beta_1:F\rightarrow M_2 $ such that $s_1f\alpha=\beta_1 i$ by Theorem \ref{c-s-abp}. Since $f$ is almost pure,  for any $s_2\in \m$ there is  an $R$-homomorphism $\eta_2:F\rightarrow M_1 $ such that $s_1s_2\alpha=\eta_2 i$ by Theorem \ref{c-s-pure}. Since $\m=\m^2$, it is easily to check that for any $s\in \m$ there is  an $R$-homomorphism $\eta_s:F\rightarrow M_1 $ such that $s\alpha=\eta_s i$.
Hence $M_1$ is almost absolutely pure by Theorem \ref{c-s-abp}.
\end{proof}

\section{Characterzing almost coherent rings in terms of almost absolutely pure modules}

Let $M$ be an $R$-module and $\mathscr{E}$ be a  class  of $R$-modules. Recall from \cite{gt} that  an $R$-homomorphism $f: E\rightarrow M$ with  $E\in \mathscr{E}$ is said to be a $ \mathscr{E}$-precover  provided that  for any $E'\in \mathscr{E}$, the natural homomorphism  $\Hom_R(E',E)\rightarrow \Hom_R(E',M)$ is an epimorphism. Moreover any $R$-homomorphism $h:E\rightarrow E$ such that $f=f\circ h$ is an automorphism.

\begin{lemma}\label{s-split} Let $R$ be a ring, $s\in R$ and  $\xi: 0\rightarrow A\xrightarrow{f} B\xrightarrow{g} C\rightarrow 0$ a short exact sequence of $R$-modules. Then there is an $R$-homomorphism $f':B\rightarrow A$ such that  $f'\circ f=s\Id_A$  if and only if  there is  an $R$-homomorphism $g':C\rightarrow B$ such that  $g\circ g'=s\Id_C$.
\end{lemma}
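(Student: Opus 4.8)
The statement is a version of the classical fact that a split exact sequence can be split ``on either side,'' modified by the scalar $s$. The plan is to prove the two implications symmetrically by a diagram-chase, using the projectivity/injectivity-free argument that works for any short exact sequence.

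First I would prove the forward direction: assume $f'\colon B\to A$ satisfies $f'\circ f=s\,\Id_A$. The idea is to build $g'\colon C\to B$ as follows. Consider the endomorphism $s\,\Id_B-f\circ f'\colon B\to B$. I claim it kills the image of $f$: indeed $(s\,\Id_B-f\circ f')(f(a))=s f(a)-f(f'(f(a)))=sf(a)-f(s a)=0$. Hence $s\,\Id_B-f\circ f'$ factors through $\operatorname{Coker}(f)$; since $g\colon B\to C$ is a cokernel of $f$ (as $\xi$ is exact), there is a unique $R$-homomorphism $g'\colon C\to B$ with $g'\circ g=s\,\Id_B-f\circ f'$. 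It remains to check $g\circ g'=s\,\Id_C$. Compose $g'\circ g=s\,\Id_B-f\circ f'$ on the left with $g$: we get $g\circ g'\circ g=s\,g-g\circ f\circ f'=s\,g-0=s\,g$, because $g\circ f=0$. Since $g$ is an epimorphism, this forces $g\circ g'=s\,\Id_C$, as desired.

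The reverse direction is dual: assume $g'\colon C\to B$ satisfies $g\circ g'=s\,\Id_C$. Consider $s\,\Id_B-g'\circ g\colon B\to B$. Composing with $g$ on the left gives $g\circ(s\,\Id_B-g'\circ g)=s\,g-(g\circ g')\circ g=s\,g-s\,g=0$, so the image of $s\,\Id_B-g'\circ g$ lies in $\operatorname{Ker}(g)=\operatorname{Im}(f)$. Since $f$ is a monomorphism, there is a unique $R$-homomorphism $f'\colon B\to A$ with $f\circ f'=s\,\Id_B-g'\circ g$. Composing with $f$ on the right yields $f\circ f'\circ f=s\,f-g'\circ(g\circ f)=s\,f$, and since $f$ is monic we conclude $f'\circ f=s\,\Id_A$.

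There is no real obstacle here — the argument uses only that $\xi$ is exact (so $g=\operatorname{coker} f$ and $f=\ker g$) and the relations $g\circ f=0$, $f$ monic, $g$ epic; no projectivity or injectivity is needed, and the scalar $s$ propagates through all the identities automatically. The only point requiring a little care is the direction of the factorizations (factoring out of a cokernel versus into a kernel), but both are standard universal-property arguments. I would write out just the forward direction in full and remark that the converse is proved symmetrically.
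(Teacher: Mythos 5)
Your proof is correct and uses the same core construction as the paper: the map $g'$ is determined by the identity $g'\circ g = s\,\Id_B - f\circ f'$. The only difference is cosmetic — the paper builds $g'$ element-wise (picking a preimage $y$ of $z$, setting $g'(z)=sy-f(f'(y))$, and checking well-definedness by hand), whereas you invoke the universal property of the cokernel, which packages that verification; both routes are equally elementary and your reverse direction via the universal property of the kernel mirrors the paper's remark that it is ``proved similarly.''
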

\begin{proof} Suppose  there is an $R$-homomorphism $f':B\rightarrow A$ such that  $f'\circ f=s\Id_A$.  
	Define the map $g': C\rightarrow  B$ as follows.
	For any $z\in C$, pick $y\in B$ with $g(y) = z$ and define  $g'(z) = sy-f(f'(y))$.
	When  $g(y) = g(y') = z$, we pick $x\in A$ with $f(x) = y-y'$, so
	$(sy - f(f'(y))) - (sy' - f(f'(y'))) = s(y-y') - f(f'(y-y'))
	= sf(x) - f(f'(f(x))) = sf(x) - sf(x) = 0$, thus $g'$ is well-defined.
	It also can be checked that $g'$ is linear. Finally, if  $g(y) = z$,  we have
	$g(g'(z)) = g(sy - f(f'(y))) = sz$ because $g\circ f=0$. Then we have $g\circ g' = s\Id_C$. The sufficiency can be proved similarly.
\end{proof}

It is well-known that a ring $R$ is coherent if and only if any pure quotient of  absolutely pure $R$-modules is absolutely pure, if and only if any direct limit of  absolutely pure $R$-modules is absolutely pure, if and only if any $R$-module has an absolutely pure (pre)cover. Now, we give the similar result in almost mathematics.

\begin{theorem}\label{s-d-d-non}
Let $R$ be a ring. Then the following statements are equivalent.
\begin{enumerate}
    \item  $R$ is almost coherent.
    \item Any almost pure quotient of  almost absolutely pure $R$-modules is almost absolutely pure.
    \item Any pure quotient of  almost absolutely pure $R$-modules is almost absolutely pure.
      \item Any pure quotient of  absolutely pure $R$-modules is almost absolutely pure.
   \item  Any direct limit of  absolutely pure $R$-modules is almost absolutely pure.
   \item  Any direct limit of almost absolutely pure $R$-modules is almost absolutely pure.
   \item Any $R$-module has an almost absolutely pure precover.
   \item Any $R$-module has an almost absolutely pure cover.
\end{enumerate}
\end{theorem}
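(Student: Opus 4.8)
The strategy is to establish a cycle of implications closely mirroring the classical coherent-ring case, but replacing every ``exact'' with ``almost exact'' and every ``zero'' with ``almost zero''. I would organize the argument as the cycle $(1)\Rightarrow(4)\Rightarrow(5)\Rightarrow(6)\Rightarrow(2)\Rightarrow(3)\Rightarrow(4)$ together with the separate loop $(2)\Rightarrow(7)\Rightarrow(8)\Rightarrow(1)$ (or whatever minimal set of arrows connects all eight statements). The trivial or near-trivial arrows are the ``forgetful'' ones: a pure exact sequence is almost pure, and an absolutely pure module is almost absolutely pure, so $(2)\Rightarrow(3)\Rightarrow(4)$ and $(6)\Rightarrow(5)$ are essentially immediate once one notes that a direct limit is a particular kind of pure quotient of a direct sum.

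\textbf{Key steps.} First, for $(1)\Rightarrow(2)$: given an almost pure epimorphism $E\twoheadrightarrow C$ with $E$ almost absolutely pure, I would show $C$ is almost absolutely pure by dualizing the argument used for almost flat modules in Theorem \ref{newchase}. Concretely, for a finitely presented $N$ take a presentation $0\to K\to P\to N\to 0$ with $P$ finitely generated projective and $K$ almost finitely generated (here almost coherence of $R$ enters, via Proposition \ref{char-ac} and \cite[Lemma 2.5.15]{B25}): one gets a finitely generated submodule $K'\subseteq K$ with $K/K'$ almost zero, and then one compares $\Ext^1_R(N,-)$ along the sequence, using Theorem \ref{c-s-abp}(5) to lift maps $K'\to C$ to $P\to C$ after multiplying by $s\in\m$, then correcting the error term (which lands in an almost-zero module) to lift maps $K\to C$. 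Second, for the converse direction $(5)\Rightarrow(1)$ or $(4)\Rightarrow(1)$: I would test almost coherence on a finitely generated ideal $J$. Writing $R/J$ and embedding each $R$ into its injective envelope, one realizes a suitable direct sum / direct limit of absolutely pure (injective) modules whose failure to stay absolutely pure detects that $K$ in a presentation $0\to K\to F\to J\to 0$ is almost finitely generated — exactly the trick used in $(5)\Rightarrow(1)$ of Theorem \ref{newchase}, now run on the $\Ext$ side. Third, the (pre)cover equivalences $(7),(8)$: from $(2)$, closure of the almost absolutely pure class under almost pure quotients plus its closure under products and sums (Proposition \ref{spsub}) gives that it is covering by the standard Eklof--Trlifaj / Enochs machinery \cite[Corollary 6.2.2]{EJ00}; conversely from the existence of almost absolutely pure precovers one recovers closure under direct limits by the usual argument that the precover of a direct limit is a direct summand, hence $(8)\Rightarrow(6)$ or $(7)\Rightarrow(1)$, and Corollary-type closure properties of almost absolutely pure modules finish it.

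\textbf{Main obstacle.} The delicate point, as in the almost-flat theorem, is the bookkeeping of the quantifier ``for all $s\in\m$'' through several lifting steps, since each application of Theorem \ref{c-s-abp}(5) or \ref{c-s-pure}(4) introduces its own factor of $s$; one must repeatedly invoke $\m^2=\m$ to absorb products $s_1s_2$ back into a single $s\in\m$, exactly as in the proof of Proposition \ref{spsub}. A second subtlety is that ``pure quotient'' and ``almost pure quotient'' are genuinely different hypotheses, so one has to be careful that the implications $(2)\Leftrightarrow(3)\Leftrightarrow(4)$ really do close up: here the point is that any pure exact sequence is almost pure (so the stronger-looking $(2)$ follows from $(4)$ once $(1)$ is available), and that every module is a pure quotient — indeed a direct limit — of finitely presented, hence of absolutely pure, modules, which is what lets $(4)$ and $(5)$ feed back into coherence of $R$. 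I expect the product/direct-limit commutation lemmas (Lemma \ref{sdir}, Proposition \ref{al-dirlim}) to handle all the passage-to-limit steps without trouble, so the real work is purely the diagram chase in $(1)\Rightarrow(2)$ and the ideal-theoretic detection argument in the converse.
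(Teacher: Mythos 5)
Your proposal tracks the paper's proof very closely: the two hard implications you single out are exactly the ones the paper does and in the same way --- $(1)\Rightarrow(2)$ by lifting along a presentation $0\to L\to P'\to K\to 0$ whose kernel $L$ is almost finitely generated thanks to almost coherence, pushing the lift through an almost-zero error term and absorbing the accumulated factors $s_1s_2s_3$ via $\m=\m^2$, and $(5)\Rightarrow(1)$ by testing a finitely generated ideal against the direct limit of injective envelopes of a directed system to detect that a syzygy is almost finitely generated (via the almost-splitting Lemma \ref{s-split}) --- and your sketch for precovers (cardinality-bounded subquotients, then a direct sum) is the paper's $(3)\Rightarrow(7)$, with covers obtained by adding closure under direct limits. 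The only imprecision is in your stated cycle of arrows: $(7)\Rightarrow(8)$ is not direct (the paper uses $(6)+(7)\Rightarrow(8)$ via \cite[Theorem 2.3.6]{EJ00}) and $(8)\Rightarrow(1)$ is routed through $(8)\Rightarrow(6)\Rightarrow(5)\Rightarrow(1)$ using \cite[Theorem 3.4]{DD18}, while $(6)\Rightarrow(2)$ in your proposed loop would have to pass back through $(1)$ --- but you flagged this yourself by hedging on the exact arrows, and the underlying mechanisms you cite are the correct ones.
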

\begin{proof} $(1)\Rightarrow (2)$: Suppose  $R$ is an almost coherent ring. Let $B$ be an almost absolutely pure $R$-module. Let $C$ be an almost pure quotient of $B$ and $A$ be an  almost pure submodule of $B$. Let $P$ be a finitely generated projective $R$-module and $K$ finitely generated submodule of $P$. Let $i:K\rightarrow P$ be the natural embedding map,  and  $f_C:K\rightarrow C$ an $R$-homomorphism. We claim that for any $s_1\in\m$ there is an $R$-homomorphism $g_{1,C}:P\rightarrow C$ such that $s_1f_C=g_{1,C}i.$  Consider the following exact sequence
$0\rightarrow L\xrightarrow{i_L} P'\xrightarrow{\pi_{P'}} K\rightarrow 0$ with $P'$ finitely generated projective. Then we have the following exact sequence of commutative diagram:
$$\xymatrix@R=25pt@C=40pt{
	0\ar[r]^{}&L \ar[r]^{i_L}\ar[d]_{f_A} &P' \ar[d]^{f_B}\ar[r]^{} &\ar[d]^{f_C} K  \ar[r]^{}& 0 \\
	0\ar[r]&A\ar[r]^{i_A} &B\ar[r]^{\pi_B} & C \ar[r]^{}&0.\\ }$$
 Since $R$ is almost coherent, $L$ is almost finite, so there is an exact sequence $0\rightarrow K'\xrightarrow{i_{K'}} L\xrightarrow{\pi_L} T\rightarrow 0$ with $K'$ finitely generated and  $T$ is almost zero. Consider the following push-out:

$$\xymatrix@R=20pt@C=25pt{ & & 0\ar[d]&0\ar[d]&\\
	0 \ar[r]^{} & K'\ar@{=}[d]\ar[r]^{i_{K'}} & L\ar[d]_{i_L}\ar[r]&T\ar[d]\ar[r]^{} &  0\\
	0 \ar[r]^{} & K'\ar[r]^{} & P' \ar[d]\ar[r]^{} &X\ar[d]\ar[r]^{} &  0\\
&  & K \ar[d]\ar@{=}[r]^{} &K\ar[d] &  \\
	& & 0 &0 &\\}$$
Let $f_A:L\rightarrow A$ be an $R$-homomorphism. It follows by Proposition \ref{spsub} that $A$ is almost absolutely pure. Then for any $s_2\in \m$ there is an $R$-homomorphism $g_{2,A}:P'\rightarrow A$ such that $s_2f_Ai_{K'}=g_{2,A}i_Li_{K'}$. Hence $s_1s_2f_A=s_1g_{2,A}i_L$. It follows by  \cite[Exercise 1.60]{fk16} that we have the following commutative diagram:
$$\xymatrix@R=25pt@C=40pt{
	0\ar[r]^{}&L \ar[r]^{i_L}\ar[d]_{s_1s_2f_A} &P'\ar@{.>}[ld]^{s_1g_{2,A}} \ar[d]^{s_1s_2f_B}\ar[r]^{} &\ar[d]^{s_1s_2f_C} K \ar@{.>}[ld]^{g_{2,B}} \ar[r]^{}& 0 \\
	0\ar[r]&A\ar[r]^{i_A} &B\ar[r]^{\pi_B} & C \ar[r]^{}&0,\\ }$$
satisfying $s_1s_2f_C=\pi_Bg_{2,B}$.
Since $B$ is almost absolutely pure,  by Theorem \ref{c-s-abp} for any  $s_3\in \m$  there exists $R$-homomorphism $g'_{3,B}:P\rightarrow B$ such that the following diagram is commutative:
	$$\xymatrix@R=30pt@C=40pt{
	K\ar[r]^{i}\ar[d]_{s_3g_B}&P\ar@{.>}[ld]^{g'_{3,B}}\\
B&\\}$$.
Consequently, we have $s_1s_2s_3f_C=s_3\pi_Bg_{2,B}=\pi_Bg_{3,B}i=gi$, where $g=\pi_Bg_{3,B}$ for any $s_1,s_2,s_3\in\m$.  Since $\m=\m^2=\m^3$, it is easy to check that for any $s\in\m$ there is $g_s:P\rightarrow B$ such that $sf_C=g_si$.
It follows by Theorem \ref{c-s-abp} that $C$ is an almost absolutely pure $R$-module.

	$(2)\Rightarrow (3)\Rightarrow (4)$, $ (6)\Rightarrow (5)$ and $(8)\Rightarrow (7)$: Trivial.
		
	$(3)\Rightarrow (6)$:	Let $\{M_i\}_{i\in\Gamma}$ be a direct system of almost absolutely pure $R$-modules. Then there is an pure exact sequence  $0\rightarrow K\rightarrow \bigoplus\limits_{i\in\Gamma}M_i\rightarrow {\lim\limits_{\longrightarrow}}M_i\rightarrow 0$. Note that $\bigoplus\limits_{i\in\Gamma}M_i$ is almost absolutely pure, so is ${\lim\limits_{\longrightarrow}}M_i$ by (3).
	
	$ (4)\Rightarrow (5)$: Similar with $(3)\Rightarrow (6)$.

	$(5)\Rightarrow (1)$: Let $I$ be a finitely generated  ideal of $R$,  $\{M_i\}_{i\in\Gamma}$  a direct system of $R$-modules. Let $\alpha: I\rightarrow \lim\limits_{\longrightarrow }M_i$ be a homomorphism. For any $i\in \Gamma$, $E(M_i)$ is the injective envelope of $M_i$. Then $E(M_i)$ is absolutely pure. By (4), $\Ext_R^1(R/I, \lim\limits_{\longrightarrow }M_i)$ is almost zero. So for any $s\in\m$ there exists an $R$-homomorphism $\beta_s:R\rightarrow \lim\limits_{\longrightarrow }E(M_i)$ such that the following  diagram commutes:
	$$\xymatrix@R=20pt@C=25pt{
		0\ar[r]^{}&I\ar[r]^{}\ar[d]_{s\alpha}&  R\ar[r]^{}\ar@{.>}[d]^{\beta_s} & R/I\ar@{.>}[d]^{}\ar[r]^{} &0\\
		0\ar[r]^{}&{\lim\limits_{\longrightarrow }}M_i \ar[r]^{}&{\lim\limits_{\longrightarrow }}E(M_i)  \ar[r]^{} &{\lim\limits_{\longrightarrow }}E(M_i)/M_i\ar[r]^{} & 0.\\}$$
	Thus, by \cite[Lemma 2.13]{gt}, there exists $j\in \Gamma$, such that $\beta_s$ can factor through $R\xrightarrow{\beta_{j,s}} E(M_j)$. Consider the following commutative diagram:
	$$\xymatrix@R=20pt@C=25pt{
		0\ar[r]^{}&I\ar[r]^{}\ar@{.>}[d]_{s\alpha_j}&  R\ar[r]^{}\ar[d]^{\beta_{j,s}} & R/I\ar@{.>}[d]^{}\ar[r]^{} &0\\
		0\ar[r]^{}&M_j \ar[r]^{}&E(M_j)  \ar[r]^{} &E(M_j)/M_j\ar[r]^{} & 0.\\}$$
	Since the composition $I\rightarrow R\rightarrow E(M_j)\rightarrow E(M_j)/M_j$ becomes to be $0$ in the direct limit, we can assume $I\rightarrow R\rightarrow E(M_j)$ can factor through some  $I\xrightarrow{s\alpha_j} M_j$. Thus $s\alpha$ can factor through $M_j$. Consequently, the natural homomorphism  $\lim\limits_{\longrightarrow } \Hom_R(I,M_i)\xrightarrow{\phi}  \Hom_R(I, \lim\limits_{\longrightarrow }M_i)$ is an almost epimorphism. Now suppose  $\{M_i\}_{i\in\Gamma}$ is a direct system of finitely presented $R$-modules such that $\lim\limits_{\longrightarrow } M_i=I$. Then for any $s\in\m$ there exists $f_s\in \Hom_R(I,M_j)$ with $j\in \Gamma$ such that the identity map  $s\Id_I= \phi(u_j(f_s))$ where $u_j$ is the natural homomorphism $\Hom_R(I,M_j)\rightarrow \lim\limits_{\longrightarrow } \Hom_R(I,M_i)$. That is, for any $s\in\m$ we have the following commutative diagram: 	
	$$\xymatrix@R=10pt@C=40pt{
		I\ar[rd]^{f_s}\ar[dd]_{s\Id_I}&\\
		&M_i\ar[ld]^{\delta_i}\\
		I=\lim\limits_{\longrightarrow } M_i&\\}$$
We claim that $I$ is  almost finitely presented. Indeed, consider the short exact $0\rightarrow \Ker(\delta_i)\xrightarrow{i} M_i\xrightarrow{\pi} \Im(\delta_i)\rightarrow 0$.
	For any $s\in\m$, set $\pi'_s=f_s\circ i_I:\Im(\delta_i)\rightarrow M_i$ where $i_I:\Im(\delta_i)\rightarrow I$ is the embedding map. Then $\pi\circ \pi'_s=\pi\circ (f_s\circ i_I)=(\pi\circ f_s)\circ i_I=\pi\circ f_s|_{\Im(\delta_i)}=s\Id_{\Im(\delta_i)}.$ It follows by Lemma \ref{s-split}, there is an  $R$-homomorphism $i'_s:M_i\rightarrow\Ker(\delta_i)$ such that $i'_s\circ i=s\Id_{\Ker(\delta_i)}$ for any $s\in\m$. As $M_i$ is finitely  presented,  we have $\Ker(\delta_i)$ is almost finitely generated. Hence,  $\Im(\delta_i)$ is almost finitely presented, and so is $I$ by \cite[Corollary 2.5.12]{B25}. Consequently, $R$ is an almost coherent ring by Proposition \ref{char-ac}.

$(8)\Rightarrow (6)$: It follows by Proposition \ref{dsdpaabs}, Proposition \ref{spsub} and \cite[Theorem 3.4]{DD18}.

$(6)+(7)\Rightarrow (8)$: It follows
from \cite[Theorem 2.3.6]{EJ00}.

$(3)\Rightarrow (7)$:  Let $M$ be an $R$-module with $|M|=\lambda$. Then there is a cardinal $\kappa$ such that if $|M|\geq \kappa$ and $|M/L|\leq \lambda$, then $L$ contains a nonzero pure submodule.  We claim that for any $R$-homomorphism $f:A\rightarrow M$ with $A$ almost absolutely pure, there is an almost absolutely pure $R$-module $B$ with $|B|\leq \kappa$ such that, $f$ can factor through $B.$ Indeed, let  $f:A\rightarrow M$ be an $R$-homomorphism with $A $ almost absolutely pure. If $|A|<\kappa$, set  $B = A.$ Now suppose $|A|\geq\kappa$. Consider a submodule $S\leq A$ which is maximal such that that $S$ is pure in $A$ and $S \subseteq \Ker(f)$ hold. Let $B = A/S.$ Then there is an $R$-homomorphism $\overline{f}:B\rightarrow M$ such that $f=\overline{f}\circ \pi$, where $\pi:A\twoheadrightarrow B$ is the natural epimorphism.
It follows by (3) that $B$ is almost absolutely pure.  Next we will show $|B|\leq \kappa$. Indeed, let $K$ be
$\Ker(\overline{f})$. Then $|B/K|\leq |M|=\lambda.$ So if $|B|\geq \kappa$  there is a nonzero pure
submodule $T/S$ of $B/S$ contained in $K$. But then $T$ is pure in $A$ and is contained in
 $\Ker(f)$, contradicting the choice of $S$.

Now, let $E$ be the direct sums of all almost absolutely pure modules with cardinals at most $\kappa$ up to isomorphism. Then by the above, it is easy to verify $E$ is an almost absolutely pure precover of $M.$
\end{proof}

 It is well-known that  a ring $R$ is  coherent if and only if $\Hom_R(E,I)$ is  flat for any absolutely pure module $E$ and injective module $I$, if and only if $\Hom_R(\Hom_R(F,I_1),I_2)$ is flat  for any  flat module $F$     and any injective modules $I_1$ and $I_2$.

\begin{theorem}\label{phi-coh-fp} Let $R$ be a ring.  Then the following statements are equivalent.
	\begin{enumerate}
		\item $R$ is an  almost coherent ring.
		\item    $\Hom_R(E,I)$ is almost flat  for any    almost absolutely pure module $E$  and any injective module $I$.
		\item    If $I$  is an injective cogenerator, then   $\Hom_R(E,I)$ is almost flat  for any  almost absolutely pure module $E$.
		\item   $\Hom_R(\Hom_R(F,I_1),I_2)$ is almost flat  for any  almost flat module $F$     and any injective modules $I_1$ and $I_2$.
\item If $I_1$ and $I_2$ are injective cogenerators, then $\Hom_R(\Hom_R(F,I_1),I_2)$ is almost flat    for any   almost flat module $F$.
	\end{enumerate}
\end{theorem}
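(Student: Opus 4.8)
The plan is to mimic the classical chain of equivalences for coherent rings, using the characterizations already established in Theorem~\ref{newchase} (almost coherence $\Leftrightarrow$ products of flats are almost flat) and Theorem~\ref{s-d-d-non} (almost coherence $\Leftrightarrow$ pure quotients of absolutely pure modules are almost absolutely pure), together with Proposition~\ref{char-a-f}, Proposition~\ref{flat-FP-injective}, and the adjunction isomorphism $\operatorname{Ext}_R^1(M,\operatorname{Hom}_R(F,I))\cong\operatorname{Hom}_R(\operatorname{Tor}_1^R(M,F),I)$ for $I$ injective, plus its companion $\operatorname{Tor}_1^R(M,\operatorname{Hom}_R(E,I))\cong\operatorname{Hom}_R(\operatorname{Ext}_R^1(M,E),I)$ valid when $E$ is absolutely pure (or for our purposes, after localizing, when $E$ is almost absolutely pure and $I$ an injective cogenerator).

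First I would prove $(1)\Rightarrow(2)$. Given $E$ almost absolutely pure and $I$ injective, for any finitely presented $R$-module $N$ one has $\operatorname{Tor}_1^R(N,\operatorname{Hom}_R(E,I))\cong\operatorname{Hom}_R(\operatorname{Ext}_R^1(N,E),I)$; since $R$ is almost coherent, $N$ admits a presentation by finitely generated modules and an easy diagram chase (or direct appeal to the fact that for almost coherent $R$ the functor $\operatorname{Ext}_R^1(N,-)$ commutes with the relevant limits up to almost isomorphism) shows this Tor is almost zero, hence $\operatorname{Hom}_R(E,I)$ is almost flat by Proposition~\ref{char-a-f}. The implications $(2)\Rightarrow(3)$ is trivial. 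For $(3)\Rightarrow(1)$: take $I$ an injective cogenerator; applying the hypothesis to absolutely pure (hence almost absolutely pure) modules $E$ and using that $\operatorname{Hom}_R(\operatorname{Ext}_R^1(N,E),I)$ is almost zero forces $\operatorname{Ext}_R^1(N,E)$ almost zero by Proposition~\ref{inj-cog}, so every absolutely pure module is almost absolutely pure; then one needs to upgrade this to show pure quotients of absolutely pure modules are almost absolutely pure. Concretely, given a pure exact sequence $0\to A\to E\to C\to 0$ with $E$ injective, the character module $\operatorname{Hom}_R(C,I)$ is a submodule of $\operatorname{Hom}_R(E,I)$ which is almost flat, and it is in fact an almost pure submodule (purity of the original sequence dualizes), so $\operatorname{Hom}_R(C,I)$ is almost flat by Lemma~\ref{spq-f}; reversing the duality via Proposition~\ref{flat-FP-injective} and Proposition~\ref{inj-cog} gives $C$ almost absolutely pure, and Theorem~\ref{s-d-d-non}(4)$\Rightarrow$(1) finishes.

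Next, $(1)\Rightarrow(4)$: for $F$ almost flat and $I_1,I_2$ injective, $\operatorname{Hom}_R(F,I_1)$ is almost absolutely pure by Proposition~\ref{flat-FP-injective}, and then $\operatorname{Hom}_R(\operatorname{Hom}_R(F,I_1),I_2)$ is almost flat by the already-proved $(1)\Rightarrow(2)$. Again $(4)\Rightarrow(5)$ is immediate. For $(5)\Rightarrow(1)$: specialize $F=R$, so $\operatorname{Hom}_R(R,I_1)=I_1$ is an injective cogenerator and the hypothesis says $\operatorname{Hom}_R(I_1,I_2)$ is almost flat for injective cogenerators $I_1,I_2$; more usefully, take an arbitrary index set and $F=\prod R$ (a product of copies of $R$ is not obviously almost flat — that is what we want to conclude — so instead) I would run it through almost absolutely pure modules: since every absolutely pure $E$ embeds in an injective $I_1$ with $I_1/E$ — better, pick $E$ absolutely pure, write it as a pure submodule of $\operatorname{Hom}_R(\operatorname{Hom}_R(E,I_1),I_2)$ via the canonical evaluation when $I_1,I_2$ are cogenerators, deduce $E$ is an almost pure submodule of an almost flat module is the wrong direction — the cleanest route is: $(5)$ with the evaluation map shows any injective cogenerator is almost flat (take $F=R$), hence any product of copies of $R$ embeds purely in a product of injective cogenerators... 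Let me instead derive $(5)\Rightarrow(3)$: given $E$ almost absolutely pure and $I$ an injective cogenerator, embed $E$ in its injective envelope $\widehat E$; by Theorem~\ref{c-s-abp} the sequence $0\to E\to\widehat E\to \widehat E/E\to 0$ is almost pure, so $\operatorname{Hom}_R(E,I)$ is an almost pure quotient of the almost flat module $\operatorname{Hom}_R(\widehat E,I)=\operatorname{Hom}_R(\operatorname{Hom}_R(R,\widehat E),I)$ — wait, $\widehat E$ need not be of the form $\operatorname{Hom}_R(F,I_1)$. So one uses that $\widehat E$, being injective, is a direct summand of some $\operatorname{Hom}_R(F,I_1)$ with $F$ free (namely $F = $ a free module mapping onto the dual, so $\widehat E \hookrightarrow \operatorname{Hom}_R(F,I_1)$ splits when $\widehat E$ is injective) — then $\operatorname{Hom}_R(\widehat E,I)$ is a direct summand of $\operatorname{Hom}_R(\operatorname{Hom}_R(F,I_1),I)$, which is almost flat by $(5)$, hence almost flat by Corollary~\ref{prop-af}, and then $\operatorname{Hom}_R(E,I)$ almost flat as an almost pure quotient by Lemma~\ref{spq-f}; invoking Proposition~\ref{flat-FP-injective}(3)$\Rightarrow$(1) — no, that gives flatness of $E$ — one uses instead Proposition~\ref{inj-cog} and the Ext-Tor duality to get $E$ almost absolutely pure, which is circular. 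The honest statement is: $(5)\Rightarrow(3)$ directly, giving back $\operatorname{Hom}_R(E,I)$ almost flat for $E$ almost absolutely pure, $I$ cogenerator, so we land in $(3)$, which we already closed back to $(1)$.

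The main obstacle I anticipate is the $(5)\Rightarrow(1)$ (equivalently $(5)\Rightarrow(3)$) step: realizing an arbitrary injective module, or the injective envelope of an almost absolutely pure module, as a direct summand of a double-character module $\operatorname{Hom}_R(\operatorname{Hom}_R(F,I_1),I_2)$ with $F$ flat and $I_1,I_2$ injective cogenerators, so that the hypothesis of $(5)$ applies. The classical argument uses that for a cogenerator $I_1$ the evaluation $E\to\operatorname{Hom}_R(\operatorname{Hom}_R(E,I_1),I_1)$ is a pure monomorphism that splits when $E$ is injective (or more robustly, works with a free cover of the dual); I would carefully set up this splitting in the almost category, then chain Corollary~\ref{prop-af} (almost flat closed under summands), Lemma~\ref{spq-f} (almost pure subs/quotients of almost flat are almost flat), Proposition~\ref{flat-FP-injective}, and Proposition~\ref{inj-cog} to translate everything back to the statement that pure quotients of absolutely pure modules are almost absolutely pure, i.e.\ Theorem~\ref{s-d-d-non}(4). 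Everything else is a routine application of the adjunction isomorphisms and the already-established characterizations.
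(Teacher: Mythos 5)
Your plan is essentially correct but takes a genuinely different route from the paper in closing the cycle of equivalences, and it is worth noting what each approach buys. Both proofs handle $(1)\Rightarrow(2)$ the same way in spirit: a diagram comparing $\Hom_R(E,I)\otimes_R-$ with $\Hom_R(\Hom_R(-,E),I)$ along a resolution of a finitely presented $N$, with the almost-coherence hypothesis supplying an (almost) finitely presented first syzygy; the paper does this carefully with natural transformations $\psi$, while you phrase it as an adjunction isomorphism $\Tor_1^R(N,\Hom_R(E,I))\cong\Hom_R(\Ext_R^1(N,E),I)$, which as stated requires exactly the coherence being used and is best presented as an almost isomorphism obtained from the diagram rather than a free fact. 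The implications $(2)\Rightarrow(3)$, $(2)\Rightarrow(4)$ (via Proposition~\ref{flat-FP-injective}), $(4)\Rightarrow(5)$ are the same in both. The real divergence is in closing the loop from $(5)$ back to $(1)$: the paper does this in one clean step by taking a family $\{F_i\}$ of flat modules, noting $\prod F_i$ embeds purely in $\prod\Hom_R(\Hom_R(F_i,I_1),I_2)$, that the latter is almost flat because it is a direct summand of $\Hom_R(\Hom_R(\oplus F_i,I_1),I_2)$ (using that $\oplus\Hom_R(F_i,I_1)$ is pure in the product), and then invoking Lemma~\ref{spq-f} and Theorem~\ref{newchase}. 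Your route instead runs $(5)\Rightarrow(3)\Rightarrow(1)$: for $(5)\Rightarrow(3)$ you embed an almost absolutely pure $E$ almost purely in its injective envelope $\widehat{E}$, realize $\widehat{E}$ as a direct summand of $\Hom_R(F,I_1)$ with $F$ free via the double-dual embedding, deduce $\Hom_R(\widehat{E},I)$ almost flat from $(5)$ and Corollary~\ref{prop-af}, then pass to the almost pure quotient $\Hom_R(E,I)$ via Lemma~\ref{spq-f}; for $(3)\Rightarrow(1)$ you dualize a pure exact sequence ending in an absolutely pure module, use the splitting of the character sequence to get $\Hom_R(C,I)$ almost flat, then reflect back via the pure evaluation monomorphism $C\hookrightarrow\Hom_R(\Hom_R(C,I),I)$ and Propositions~\ref{flat-FP-injective} and~\ref{spsub} to get $C$ almost absolutely pure, finishing by Theorem~\ref{s-d-d-non}. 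This is workable but has more moving parts, and it tacitly requires verifying that dualizing an almost pure sequence against an injective module yields an almost pure sequence — a fact you should make explicit. The paper's $(5)\Rightarrow(1)$ avoids injective envelopes, the splitting of character sequences, and Theorem~\ref{s-d-d-non} entirely, and is the more economical argument; your observation at the start of $(3)\Rightarrow(1)$ that every absolutely pure module is almost absolutely pure is vacuous (true by definition) and should be dropped.
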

\begin{proof}
	$(2)\Rightarrow (3)$ and $(4)\Rightarrow (5)$: Trivial.
	
	$(2)\Rightarrow (4)$ and $(3)\Rightarrow (5)$: These follow by Proposition \ref{flat-FP-injective}.
	
	$(1)\Rightarrow (2)$: Suppose $R$ is an almost coherent ring. Let $M$ be a finitely presented $R$-module. Then there is an exact sequence $$0\rightarrow Q/K\rightarrow P\rightarrow M\rightarrow 0$$ with $P,Q$ finitely generated projective $R$-modules. And so the finitely presented $R$-module $P$ is an almost coherent by \cite[Lemma 2.6.14]{B25}. Hence the finitely generated submodule $Q/K$ of $P$ is almost finitely presented. So $K$ is almost finite.	And thus there exists	an exact sequence of $R$-modules $$0 \rightarrow T\rightarrow Q/K'\rightarrow Q/K\rightarrow 0,$$ where $K'$ is finitely generated and $T$ is  almost zero.
	
	 Consider the following commutative diagrams with exact rows ($(-,-)$ is instead of $\Hom_R(-,-)$):
	$$\xymatrix@R=20pt@C=15pt{
		(E,I)\otimes_R T\ar[r]^{} \ar[d]^{\psi^1_{T'}} &(E,I)\otimes_R Q/K' \ar[d]_{\psi_{Q/K'}}^{\cong}\ar[r]^{} & (E,I)\otimes_R Q/K \ar[d]_{\psi_{Q/K}}\ar[r]^{}&0\\
		((T,E),I)\ar[r]^{} & ((Q/K',E),I) \ar[r]^{} &((Q/K,E),I)\ar[r]^{} &0,\\ }$$
	and
	$$\xymatrix@R=20pt@C=15pt{
		0\ar[r]^{}&\Tor_1^R((E,I),M) \ar[r]^{}\ar[d]^{\psi^1_{M}} &(E,I)\otimes_R Q/K \ar[d]_{\psi_{Q/K}}\ar[r]^{} & (E,I)\otimes_R Q \ar[d]_{\psi_{Q}}^{\cong}\ar[r]^{} &(E,I)\otimes_R M \ar[d]_{\psi_{M}}^{\cong}\ar[r]^{} &0\\
		0\ar[r]&(\Ext_R^1(M,E),I) \ar[r]^{} &((Q/K,E),I)\ar[r]^{} & ((Q,E),I) \ar[r]^{} &((M,E),I)\ar[r]^{} &0\\ }$$
 It follows by chasing diagram and Proposition \ref{s-5-lemma} that $\psi^1_{M}$ is an almost isomorphism. Since $E$ is almost absolutely pure, $\Ext_R^1(M,E)$ is  almost zero, so is $\Hom_R(\Ext_R^1(M,E),I)$. Hence $\Tor_1^R(\Hom_R(M,E),M)$ is almost zero, and thus $\Hom_R(M,E)$ is almost flat by Proposition \ref{flat-FP-injective}.

$(5)\Rightarrow (1)$: Let $\{F_i\}$ be a family of flat $R$-module. We will show $\prod F_i$ is almost flat. Since $\oplus F_i$ is flat,  by assumption $$\Hom_R(\Hom_R(\oplus F_i,I_1),I_2)\cong \Hom_R(\prod\Hom_R(F_i,I_1),I_2)$$ is almost flat.  Note that $\oplus\Hom_R(F_i,I_1)$ is a pure submodule of $\prod\Hom_R(F_i,I_1)$. Then the natural epimorphism $$\Hom_R(\prod\Hom_R(F_i,I_1),I_2)\rightarrow \Hom_R(\oplus\Hom_R(F_i,I_1),I_2)\rightarrow 0$$ splits. Hence, $\prod\Hom_R(\Hom_R(F_i,I_1),I_2)\cong \Hom_R(\oplus\Hom_R(F_i,I_1),I_2)$ is also almost flat. Since $\prod F_i$ is a pure submodule of $\prod \Hom_R(\Hom_R(F_i,I_1),I_2)$, we have  $\prod F_i$ is almost flat by Lemma \ref{spq-f}. It follows by Theorem \ref{newchase} that $R$ is almost coherent.
\end{proof}

\bigskip

\end{document}